\title{Computing singular and near-singular integrals over curved boundary elements: The strongly singular case\thanks{Submitted to the editors DATE.}}
\author{Hadrien Montanelli\thanks{Inria, ENSTA Paris, UMA, Institut Polytechnique de Paris, 91120 Palaiseau, France.}
\and Francis Collino\footnotemark[2]
\and Houssem Haddar\footnotemark[2]}
\begin{document}

\maketitle

%%%%%%%%%%%%%%%%%%%%%%%%%%%%%%%%%%%%%%%%%%%%%%%%%%%%%%%%%%%%%%%%%%%%%%%%%%%%%%%%%%%%%%%%%%%%%%%%%%%%%%%%%%%%%%%%%%%%%%%%%%%%%%%%%%%%%%%%%%%%%%%%%%%%%%
\begin{abstract}
We present algorithms for computing strongly singular and near-singular surface integrals over curved triangular patches, based on singularity subtraction, the continuation approach, and transplanted Gauss quadrature. We demonstrate the accuracy and robustness of our method for quadratic basis functions and quadratic triangles by integrating it into a boundary element code and solving several scattering problems in 3D. We also give numerical evidence that the utilization of curved boundary elements enhances computational efficiency compared to conventional planar elements.
\end{abstract}

%%%%%%%%%%%%%%%%%%%%%%%%%%%%%%%%%%%%%%%%%%%%%%%%%%%%%%%%%%%%%%%%%%%%%%%%%%%%%%%%%%%%%%%%%%%%%%%%%%%%%%%%%%%%%%%%%%%%%%%%%%%%%%%%%%%%%%%%%%%%%%%%%%%%%%
\begin{keywords}
Helmholtz equation, integral equations, boundary element method, near-singular integrals, homogeneous functions, continuation approach
\end{keywords}

%%%%%%%%%%%%%%%%%%%%%%%%%%%%%%%%%%%%%%%%%%%%%%%%%%%%%%%%%%%%%%%%%%%%%%%%%%%%%%%%%%%%%%%%%%%%%%%%%%%%%%%%%%%%%%%%%%%%%%%%%%%%%%%%%%%%%%%%%%%%%%%%%%%%%%l
\begin{MSCcodes}
35J05, 41A55, 41A58, 45E05, 45E99, 65N38, 65R20, 78M15
\end{MSCcodes}

%%%%%%%%%%%%%%%%%%%%%%%%%%%%%%%%%%%%%%%%%%%%%%%%%%%%%%%%%%%%%%%%%%%%%%%%%%%%%%%%%%%%%%%%%%%%%%%%%%%%%%%%%%%%%%%%%%%%%%%%%%%%%%%%%%%%%%%%%%%%%%%%%%%%%%
\section{Introduction}

The Helmholtz equation, which has the form
\begin{align}\label{eq:Helmholtz}
\Delta u + k^2u = 0,
\end{align}
appears when one looks for time-harmonic solutions to the wave equation---if $v(\bs{x},t)=u(\bs{x})e^{-i\omega t}$ is a solution to $v_{tt}=c^2\Delta v$, then $u$ satisfies \cref{eq:Helmholtz} with wavenumber $k=\omega /c$. It is of fundamental importance in science and engineering, with applications as diverse as noise scattering, radar and sonar technology, and seismology. For instance, given an incident acoustic wave $u^i$ that is a solution to \cref{eq:Helmholtz} in $\R^3$, the outgoing scattered field $u^s$ generated by a bounded obstacle $D$ is also a solution to \cref{eq:Helmholtz}, in $\R^3\setminus\overline{D}$, with $u^i+u^s=0$ on the boundary $\partial D$ (for sound-soft scattering).

For obstacles $D$ whose complements are connected, a popular technique for calculating scattered fields is based on integral equations. As an example, one can show that the solution $u^s$ to the sound-soft scattering problem of the previous paragraph may be obtained by solving \cite[eq.~(3.51)]{colton1983},
\begin{align}\label{eq:CBIE}
\int_{\partial D}\left\{\frac{\partial G(\bs{x},\bs{y})}{\partial\bs{n}(\bs{y})} - i\eta G(\bs{x},\bs{y})\right\}\varphi^s(\bs{y})dS(\bs{y}) + \frac{\varphi^s(\bs{x})}{2} = -u^i(\bs{x}), \quad \bs{x}\in\partial D,
\end{align}
for some arbitrary real number $\eta\neq0$ such that $\eta\mrm{Re}(k)\geq0$. (The solution to \cref{eq:CBIE} is unique when $\mrm{Im}\,k\geq0$ \cite[Thm.~3.33]{colton1983}.) Once \cref{eq:CBIE} is solved for $\varphi^s$, the scattered field is given by \cite[eq.~(3.49)]{colton1983}
\begin{align}\label{eq:CBIE-sol}
u^s(\bs{x}) =\int_{\partial D}\left\{\frac{\partial G(\bs{x},\bs{y})}{\partial\bs{n}(\bs{y})} - i\eta G(\bs{x},\bs{y})\right\}\varphi^s(\bs{y})dS(\bs{y}), \quad \bs{x}\in\R^3\setminus{\overline{D}}.
\end{align}
The function $G$ in \cref{eq:CBIE} and \cref{eq:CBIE-sol} is the Green's function of the Helmholtz equation in 3D,
\begin{align}\label{eq:kernel}
G(\bs{x},\bs{y}) = \frac{1}{4\pi}\frac{e^{ik\vert\bs{x}-\bs{y}\vert}}{\vert\bs{x}-\bs{y}\vert}.
\end{align}
One of the challenges one faces when solving integral equations of the form of \cref{eq:CBIE} is the computation of singular integrals---when $\bs{x}$ is close or equal to $\bs{y}$, the Green's function and its derivatives become (numerically) unbounded. There are many specialized methods to compute such integrals, including singularity subtraction \cite{aliabadi1985, guiggiani1990, guiggiani1992, hall1991}, singularity cancellation \cite{duffy1982, johnston2013, qin2011, reid2015, xie2020, zhong2024}, and the continuation approach \cite{lenoir2012, rosen1993, rosen1995, salles2013, vijayakumar1989}. Recent works include \cite{faria2021, klockner2013, perezarancibia2019, zhu2022}. 

We proposed, in a previous paper, algorithms to compute weakly singular integrals using singularity subtraction and the continuation approach \cite{montanelli2022}. The term \textit{weakly singular} refers to integrals with singularities of the same type as the Green's function \cref{eq:kernel}. We extend, here, our method to \textit{strongly singular} integrals, which have singularities of the same nature as the derivatives of \cref{eq:kernel}. Specifically, we consider strongly singular and near-singular integrals of the form
\begin{align}\label{eq:intsing2-double-layer}
I(\bs{x}_0) = \int_{\mathcal{T}}\frac{(\bs{x} - \bs{x}_0)\cdot\bs{n}(\bs{x})}{\vert\bs{x}-\bs{x}_0\vert^3}\varphi(F^{-1}(\bs{x}))dS(\bs{x}),
\end{align}
where $\mathcal{T}\subset\R^3$ is a curved triangle defined by a polynomial transformation $F:\widehat{T}\to\mathcal{T}$ of degree $q\geq1$ from some reference planar triangle $\widehat{T}\subset\R^2$, $\bs{x}_0\in\R^3$ is a point on/or close to $\mathcal{T}$, $\varphi:\widehat{T}\to\R$ is a polynomial function of degree $p\geq0$ (not necessarily equal to $q$), and $\bs{n}(\bs{x})$ is the unit normal vector at $\bs{x}$. As in the companion paper \cite{montanelli2022}, our method is based on the computation of the preimage of the singularity in the reference element's space using Newton's method, singularity subtraction with Taylor-like asymptotic expansions, the continuation approach, and transplanted Gauss quadrature. Integrals of the form of \cref{eq:intsing2-double-layer} appear when evaluating the solution \cref{eq:CBIE-sol} at $\bs{x}_0$---we will also look at integrals over two triangles, which occur when solving \cref{eq:CBIE} with a boundary element method \cite{sauter2011}.

One of the main advantages of our method over the standard continuation approach \cite{rosen1995} is that we perform singularity subtraction and continuation after mapping back to the reference triangle, which is both computationally less expensive and much easier to implement. Our novel algorithms are also particularly well-suited for configurations where the singularity is close to the patch's edges. Finally, the benefit of being able to compute strong singularities is that we can solve the Dirichlet problem using the combined boundary integral equation \cref{eq:CBIE}, which is coercive when $k$ is large~\cite{spence2015}. (Coercivity gives explicit bounds one the number of GMRES iterations to achieve a given accuracy.) Our previous work was limited to weak singularities and the (inferior) single-layer formulation.

We start by reviewing the properties of the solid angle in \cref{sec:solid-angle}, which is closely connected to the strongly singular and near-singular integrals \cref{eq:intsing2-double-layer}. We then present our method for computing such integrals in \cref{sec:algorithms}, and provide several numerical examples in \cref{sec:numerics}.

%%%%%%%%%%%%%%%%%%%%%%%%%%%%%%%%%%%%%%%%%%%%%%%%%%%%%%%%%%%%%%%%%%%%%%%%%%%%%%%%%%%%%%%%%%%%%%%%%%%%%%%%%%%%%%%%%%%%%%%%%%%%%%%%%%%%%%%%%%%%%%%%%%%%%%
\section{The solid angle}\label{sec:solid-angle}

When $\varphi$ is the constant function $1$, the integral \cref{eq:intsing2-double-layer} simplifies to
\begin{align}
I(\bs{x}_0) = \int_{\mathcal{T}}\frac{(\bs{x}-\bs{x}_0)\cdot\bs{n}(\bs{x})}{\vert\bs{x}-\bs{x}_0\vert^3}dS(\bs{x}) = -\Omega(\bs{x}_0),
\end{align}
where $\Omega(\bs{x}_0)$ is the solid angle of the curved triangle $\mathcal{T}$ subtended at the point $\bs{x}_0$. The solid angle $\Omega(\bs{x}_0)$ cannot be expressed using a closed-form formula for a general curved triangle of degree $q\geq2$. However, in the case of planar triangles where $q=1$, a closed-form formula does exist, which we will examine next to highlight some of the key features of strongly singular integrals.

%%%%%%%%%%%%%%%%%%%%%%%%%%%%%%%%%%%%%%%%%%%%%%%%%%%%%%%%%%%%%%%%%%%%%%%%%%%%%%%%%%%%%%%%%%%%%%%%%%%%%%%%%%%%%%%%%%%%%%%%%%%%%%%%%%%%%%%%%%%%%%%%%%%%%%
\subsection{Planar triangles} 

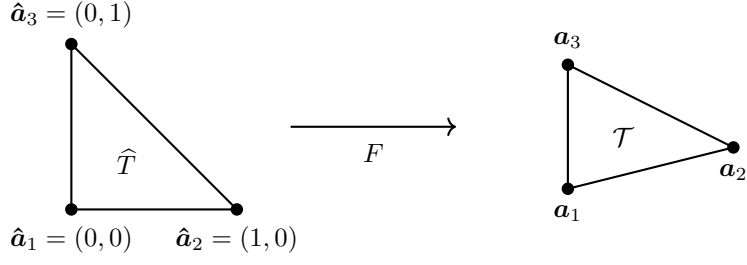
\begin{figure}
\centering
\begin{tikzpicture}[scale = 1.10]
	  \coordinate (a1) at (0, 0); 
	  \coordinate (a2) at (2, 0); 
	  \coordinate (a3) at (0, 2); 
      \draw[thick] (a1) -- (a2) {};
      \draw[thick] (a2) -- (a3) {};
      \draw[thick] (a3) -- (a1) {};
      \node[fill, circle, scale=0.5] at (a1) {};
      \node[anchor=north, yshift=-0.1cm] at (a1) {$\bs{\hat{a}}_1=(0,0)$};
      \node[fill, circle, scale=0.5] at (a2) {};
      \node[anchor=north, yshift=-0.1cm] at (a2) {$\bs{\hat{a}}_2=(1,0)$}; 
      \node[fill, circle, scale=0.5] at (a3) {};
      \node[anchor=south, yshift=0.1cm] at (a3) {$\bs{\hat{a}}_3=(0,1)$};
      \node[anchor=north, yshift=0.2cm] at ($1/3*(a1) + 1/3*(a2) + 1/3*(a3)$) {$\widehat{T}$};
      \draw[thick, ->] (1.65+1, 1) -- (3.65+1, 1) {};
      \node[anchor=north] at (2.65+1, 1-0.1) {$F$}; 
      \def\scl{6}
	  \coordinate (b1) at (0+\scl, 0.25); 
	  \coordinate (b2) at (2+\scl, 0.75); 
	  \coordinate (b3) at (0+\scl, 1.75); 
      \draw[thick] (b1) -- (b2) {};
      \draw[thick] (b2) -- (b3) {};
      \draw[thick] (b3) -- (b1) {};     
      \node[fill, circle, scale=0.5] at (b1) {};
      \node[anchor=north, yshift=-0.1cm] at (b1) {$\bs{a}_1$};
      \node[fill, circle, scale=0.5] at (b2) {};
      \node[anchor=north, yshift=-0.1cm] at (b2) {$\bs{a}_2$};
      \node[fill, circle, scale=0.5] at (b3) {};
      \node[anchor=south, yshift=0.1cm] at (b3) {$\bs{a}_3$};
	  \node[anchor=north, yshift=0.2cm] at ($1/3*(b1) + 1/3*(b2) + 1/3*(b3)$) {$\mathcal{T}$};
\end{tikzpicture}
\caption{\textit{An arbitrary planar triangle $\mathcal{T}$ is obtained from the planar reference triangle $\widehat{T}$ via the linear map $F$ defined in \cref{eq:linear-map}. The $\bs{\hat{a}}_j$'s verify $\lambda_i(\bs{\hat{a}}_j)=\delta_{ij}$, while the $\bs{a}_j$'s verify $\bs{a}_j=F(\bs{\hat{a}}_j)$.}}
\label{fig:planar-tri}
\end{figure}

Let $\widehat{T}$ be the reference planar triangle,
\begin{align}
\widehat{T} = \{(\hat{x}_1,\hat{x}_2) \, : \, 0\leq\hat{x}_1\leq1, \; 0\leq\hat{x}_2\leq1-\hat{x}_1\} \subset\R^2.
\end{align}
An arbitrary planar triangle $\mathcal{T}\subset\R^3$ may be defined by three points $\bs{a}_j\in\R^3$ and a linear transformation $F:\widehat{T}\mapsto\mathcal{T}$,
\begin{align}\label{eq:linear-map}
F(\bs{\hat{x}}) = \sum_{j=1}^3\lambda_j(\bs{\hat{x}})\bs{a}_j\in\R^3,
\end{align}
with $\bs{\hat{x}}=(\hat{x}_1,\hat{x}_2)$, and where the $\lambda_j$'s are the linear basis functions defined on $\widehat{T}$ by
\begin{align}\label{eq:linear-func}
\lambda_1(\bs{\hat{x}}) = 1 - \hat{x}_1 - \hat{x}_2, \quad
\lambda_2(\bs{\hat{x}}) = \hat{x}_1, \quad
\lambda_3(\bs{\hat{x}}) = \hat{x}_2. 
\end{align}
We illustrate this in \cref{fig:planar-tri}. Note that $F(\bs{\hat{x}}) = J\bs{\hat{x}} + \bs{a}_1$ with a constant Jacobian matrix
\begin{align}
J = \left(\begin{array}{c|c}
& \\
\bs{a}_2 - \bs{a}_1 & \bs{a}_3 - \bs{a}_1 \\
&
\end{array}\right)\in\R^{3\times2}.
\end{align}
For such a triangle, the solid angle
\begin{align}
\Omega(\bs{x}_0) = \int_{\mathcal{T}}\frac{(\bs{x}_0-\bs{x})\cdot\bs{n}}{\vert\bs{x}-\bs{x}_0\vert^3}dS(\bs{x}), \quad
\bs{n} = \frac{(\bs{a}_2-\bs{a}_1)\times(\bs{a}_3-\bs{a}_2)}{\vert(\bs{a}_2-\bs{a}_1)\times(\bs{a}_3-\bs{a}_2)\vert},
\end{align}
may be computed via the following formula \cite{vanoosterom1983},
\begin{align}\label{eq:solid-angle}
\Omega(\bs{x}_0) = \mrm{sign}((\bs{x}_0 - \bs{a}_1)\cdot\bs{n})\times(-\pi + \phi_1(\bs{x}_0) + \phi_2(\bs{x}_0) + \phi_3(\bs{x}_0)),
\end{align}
with dihedral angles
\begin{align}
\phi_1 = \mathrm{acos}\left(\frac{c_1 - c_2c_3}{s_2s_3}\right), \quad
\phi_2 = \mathrm{acos}\left(\frac{c_2 - c_1c_3}{s_1s_3}\right), \quad
\phi_3 = \mathrm{acos}\left(\frac{c_3 - c_1c_2}{s_1s_2}\right).
\end{align}
The $c_i$'s above are given by
\begin{align}
c_1 = \frac{(\bs{a}_2 - \bs{x}_0)\cdot(\bs{a}_3 - \bs{x}_0)}{\big\vert\bs{a}_2 - \bs{x}_0\big\vert\big\vert\bs{a}_3 - \bs{x}_0\big\vert}, \quad
c_2 = \frac{(\bs{a}_1 - \bs{x}_0)\cdot(\bs{a}_3 - \bs{x}_0)}{\big\vert\bs{a}_1 - \bs{x}_0\big\vert\big\vert\bs{a}_3 - \bs{x}_0\big\vert}, \quad
c_3 = \frac{(\bs{a}_1 - \bs{x}_0)\cdot(\bs{a}_2 - \bs{x}_0)}{\big\vert\bs{a}_1 - \bs{x}_0\big\vert\big\vert\bs{a}_2 - \bs{x}_0\big\vert},
\end{align}
while the $s_i$'s are given by $s_i=\sqrt{1 - c_i^2}$. 

An important consequence of \cref{eq:solid-angle} is that, as $\bs{x}_0$ approaches the interior of $\mathcal{T}$ from above (with respect to its normal), $\Omega(\bs{x}_0)$ tends to $2\pi$, since each of the dihedral angles tends to $\pi$. On the other hand, when $\bs{x}_0$ approaches an edge or the vertex $\bs{a}_i$ of $\mathcal{T}$, $\Omega(\bs{x}_0)$ tends to $\pi$ or to the dihedral angle $\phi_i$, respectively. Finally, $\Omega(\bs{x}_0)$ goes to $0$ when $\bs{x}_0$ approaches a point in the plane of $\mathcal{T}$ while staying outside of $\mathcal{T}$. (These results have to be changed to their negatives if $\bs{x}_0$ approaches $\mathcal{T}$ from below.) To summarize, we have the following limits:
\begin{align}\label{eq:solid-angle-limits}
& \Omega(\bs{x}_0) \to \pm2\pi, && \text{as $\bs{x}_0$ approaches the interior of $\mathcal{T}$}, \\
& \Omega(\bs{x}_0) \to \pm\pi, && \text{as $\bs{x}_0$ approaches an edge of $\mathcal{T}$ but not a vertex}, \nonumber \\
& \Omega(\bs{x}_0) \to \pm\phi_i, && \text{as $\bs{x}_0$ approaches the vertex $\bs{a}_i$ of $\mathcal{T}$}, \nonumber \\
& \Omega(\bs{x}_0) \to 0, && \text{as $\bs{x}_0$ approaches the exterior $\mathcal{T}$}. \nonumber
\end{align}
Therefore, the solid angle is discontinuous through $\mathcal{T}$ but, in view of \cref{eq:solid-angle-limits}, we may set
\begin{align}\label{eq:solid-angle-zero}
\Omega(\bs{x}_0) = \frac{\lim_{\bs{x}_0\to\mathcal{T}^+}\Omega(\bs{x}_0) + \lim_{\bs{x}_0\to\mathcal{T}^-}\Omega(\bs{x}_0)}{2} = 0, \quad \bs{x}_0\in\mathcal{T}.
\end{align}
(In this formula, $\mathcal{T}^\pm$ means that the point $\bs{x}_0$ approaches the triangle $\mathcal{T}$ from above/below.)

Another way to obtain the limits \cref{eq:solid-angle-limits}, which will be useful for curved triangles, is to map the solid angle back to the reference triangle $\widehat{T}$,
\begin{align}
\Omega(\bs{x}_0) = \int_{\widehat{T}}\frac{(\bs{x}_0 - F(\bs{\hat{x}}))\cdot\bs{\hat{n}}}{\vert F(\bs{\hat{x}})-\bs{x}_0\vert^3}dS(\bs{\hat{x}}), \quad \bs{\hat{n}}=J_1\times J_2,
\end{align}
and write $\bs{x}_0$ as the sum of a vector parallel to $\mathcal{T}$ and a vector orthogonal to $\mathcal{T}$,
\begin{align}
\bs{x}_0 = F(\bs{\hat{x}}_0) + h\frac{\bs{\hat{n}}}{\vert\bs{\hat{n}}\vert},
\end{align} 
for some scalar $h$. Using $F(\bs{\hat{x}})=J\bs{\hat{x}} + \bs{a}_1$, we get $\Omega(\bs{\hat{x}}_0, h) = \Omega_{-1}(\bs{\hat{x}}_0, h) + \Omega_{-2}(\bs{\hat{x}}_0, h)$ with
\begin{align}
\Omega_{-1}(\bs{\hat{x}}_0, h) = \int_{\widehat{T}}\frac{\left[J(\bs{\hat{x}}_0 - \bs{\hat{x}})\right]\cdot\bs{\hat{n}}}{\left[\vert J(\bs{\hat{x}}-\bs{\hat{x}}_0)\vert^2 + h^2\right]^{\frac{3}{2}}}dS(\bs{\hat{x}}), \\
\Omega_{-2}(\bs{\hat{x}}_0, h) = \int_{\widehat{T}}\frac{h\vert\bs{\hat{n}}\vert}{\left[\vert J(\bs{\hat{x}}-\bs{\hat{x}}_0)\vert^2 + h^2\right]^{\frac{3}{2}}}dS(\bs{\hat{x}}). \nonumber 
\end{align}
The term $\Omega_{-1}(\bs{\hat{x}}_0, h)$ vanishes for all $\bs{\hat{x}}_0$ and $h$ since $J(\bs{\hat{x}}_0 - \bs{\hat{x}})$ is orthogonal to the normal $\bs{\hat{n}}$. The term $\Omega_{-2}(\bs{\hat{x}}_0, h)$ is strongly singular and we recover the limits \cref{eq:solid-angle-limits} by noting that the integrand approaches the radially symmetric Dirac delta as $h\to0^\pm$. In other words, we have
\begin{align}
\Omega(\bs{\hat{x}}_0, h) = \int_{\widehat{T}}\frac{h\vert\bs{\hat{n}}\vert}{\left[\vert J(\bs{\hat{x}}-\bs{\hat{x}}_0)\vert^2 + h^2\right]^{\frac{3}{2}}}dS(\bs{\hat{x}}),
\end{align}
with the limits
\begin{align}\label{eq:solid-angle-limits-2}
& \lim_{h\to0^\pm}\Omega(\bs{\hat{x}}_0,h) \to \pm2\pi, && \text{if $\bs{\hat{x}}_0$ is in the interior of $\widehat{T}$}, \\
& \lim_{h\to0^\pm}\Omega(\bs{\hat{x}}_0,h) \to \pm\pi, && \text{if $\bs{\hat{x}}_0$ is on an edge of $\widehat{T}$ but is not a vertex}, \nonumber \\
& \lim_{h\to0^\pm}\Omega(\bs{\hat{x}}_0,h) \to \pm\phi_i, && \text{if $\bs{\hat{x}}_0$ is a vertex of $\widehat{T}$, that is, $F(\bs{\hat{x}}_0)=\bs{a}_i$}, \nonumber \\
& \lim_{h\to0^\pm}\Omega(\bs{\hat{x}}_0,h) \to 0, && \text{if $\bs{\hat{x}}_0$ is in the exterior $\widehat{T}$}. \nonumber
\end{align}
Once again, we observe that $\Omega(\bs{\hat{x}}_0, h)$ is discontinuous at $h=0$ but, in view of \cref{eq:solid-angle-limits-2}, we may set
\begin{align}\label{eq:solid-angle-zero-2}
\Omega(\bs{\hat{x}}_0,0)=0, \quad \bs{\hat{x}}_0\in\widehat{T}.
\end{align}
We summarize the limits \cref{eq:solid-angle-limits-2} and \cref{eq:solid-angle-zero-2} in \cref{tab:solid-angle-planar}. We may be tempted to say that the solid angle is merely weakly singular because of the cancellation when $h=0$. However, in practice, it will act as if it were strongly singular when $h$ is small. We will come back to this in \cref{sec:numerics}.

\begin{table}
\caption{\textit{Limiting values of the solid angle $\Omega(\bs{\hat{x}}_0,h)$ for planar triangles.}}
\label{tab:solid-angle-planar}
\centering
\ra{1.3}
\begin{tabular}{c|c|c}
\toprule
$h\to0^+$ & $h\to0^-$ & $h=0$ \\
\midrule
\begin{tikzpicture}[scale = 0.9]
      \draw[pattern=north west lines, pattern color=lightgray, draw=none] (-1,-1) -- (3,-1) -- (3,0) -- (0,0) -- (0,2) -- (2,0) -- (3,0) -- (3,3) -- (-1,3) -- (-1,-1);
      \draw[thick] (0, 0) -- (2, 0) {};
      \draw[thick] (0, 0) -- (0, 2) {};
      \draw[thick] (2, 0) -- (0, 2) {};
      \node[fill, circle, scale=0.5] at (0, 0) {};
      \node[anchor=north] at (0, 0-0.1) {$\phi_1$};
      \node[fill, circle, scale=0.5] at (2, 0) {};
      \node[anchor=north] at (2, 0-0.1) {$\phi_2$};
      \node[fill, circle, scale=0.5] at (0, 2) {};
      \node[anchor=south] at (0, 2+0.1) {$\phi_3$};
      \node[anchor=north] at (1, 0-0.1) {$\pi$};
      \node[anchor=west] at (1+0.2, 1) {$\pi$};
      \node[anchor=east] at (0-0.1, 1) {$\pi$};
      \node[anchor=north] at (2/3, 3/3) {$2\pi$};
      \node[anchor=north, color=lightgray] at (2, 2.5) {\Large{$0$}};
      \node[anchor=north] at (0,3.15) {$\phantom{1}$};
\end{tikzpicture} 
& \begin{tikzpicture}[scale = 0.9]
      \draw[pattern=north west lines, pattern color=lightgray, draw=none] (-1,-1) -- (3,-1) -- (3,0) -- (0,0) -- (0,2) -- (2,0) -- (3,0) -- (3,3) -- (-1,3) -- (-1,-1);
      \draw[thick] (0, 0) -- (2, 0) {};
      \draw[thick] (0, 0) -- (0, 2) {};
      \draw[thick] (2, 0) -- (0, 2) {};
      \node[fill, circle, scale=0.5] at (0, 0) {};
      \node[anchor=north] at (0, 0-0.1) {$-\phi_1$};
      \node[fill, circle, scale=0.5] at (2, 0) {};
      \node[anchor=north] at (2, 0-0.1) {$-\phi_2$};
      \node[fill, circle, scale=0.5] at (0, 2) {};
      \node[anchor=south] at (0, 2+0.1) {$-\phi_3$};
      \node[anchor=north] at (1, 0-0.1) {$-\pi$};
      \node[anchor=west] at (1+0.2, 1) {$-\pi$};
      \node[anchor=east] at (0-0.1, 1) {$-\pi$};
      \node[anchor=north] at (2/3, 3/3) {$-2\pi$};
      \node[anchor=north, color=lightgray] at (2, 2.5) {\Large{$0$}};
\end{tikzpicture} 
& \begin{tikzpicture}[scale = 0.9]
      \draw[pattern=north west lines, pattern color=lightgray, draw=none] (-1,-1) -- (3,-1) -- (3,3) -- (-1,3);
      \draw[thick] (0, 0) -- (2, 0) {};
      \draw[thick] (0, 0) -- (0, 2) {};
      \draw[thick] (2, 0) -- (0, 2) {};
      \node[fill, circle, scale=0.5] at (0, 0) {};
      \node[fill, circle, scale=0.5] at (2, 0) {};
      \node[fill, circle, scale=0.5] at (0, 2) {};
      \node[anchor=north, color=lightgray] at (2, 2.5) {\Large{$0$}};
\end{tikzpicture} \\
\bottomrule
\end{tabular}
\end{table}

%%%%%%%%%%%%%%%%%%%%%%%%%%%%%%%%%%%%%%%%%%%%%%%%%%%%%%%%%%%%%%%%%%%%%%%%%%%%%%%%%%%%%%%%%%%%%%%%%%%%%%%%%%%%%%%%%%%%%%%%%%%%%%%%%%%%%%%%%%%%%%%%%%%%%%
\subsection{Curved triangles} 

Throughout the paper, we will utilize quadratic triangles to illustrate our results for curved triangles. A quadratic triangle $\mathcal{T}\subset\R^3$ may be defined by six points $\bs{a}_j\in\R^3$ and a quadratic map $F:\widehat{T}\to\mathcal{T}$,
\begin{align}\label{eq:quad-map}
F(\bs{\hat{x}}) = \sum_{j=1}^6\varphi_j(\bs{\hat{x}})\bs{a}_j\in\R^3,
\end{align}
where the $\varphi_j$'s are the quadratic basis functions defined on $\widehat{T}$ by
\begin{align}\label{eq:quad-func}
& \varphi_1(\bs{\hat{x}}) = \lambda_1(\bs{\hat{x}})(2\lambda_1(\bs{\hat{x}}) - 1), \quad\quad \varphi_4(\bs{\hat{x}}) = 4\lambda_1(\bs{\hat{x}})\lambda_2(\bs{\hat{x}}), \\
& \varphi_2(\bs{\hat{x}}) = \lambda_2(\bs{\hat{x}})(2\lambda_2(\bs{\hat{x}}) - 1), \quad\quad \varphi_5(\bs{\hat{x}}) = 4\lambda_2(\bs{\hat{x}})\lambda_3(\bs{\hat{x}}), \nonumber \\
& \varphi_3(\bs{\hat{x}}) = \lambda_3(\bs{\hat{x}})(2\lambda_3(\bs{\hat{x}}) - 1), \quad\quad \varphi_6(\bs{\hat{x}}) = 4\lambda_1(\bs{\hat{x}})\lambda_3(\bs{\hat{x}}), \nonumber
\end{align}
and the $\lambda_j$'s are defined in \cref{eq:linear-func}. We illustrate this in \cref{fig:curved-tri}. Here, the Jacobian matrix $J$ reads
\begin{align}
J(\bs{\hat{x}}) = \left(
\begin{array}{c|c}
& \\
\hspace{-0.15cm} F_{\hat{x}_1}(\bs{\hat{x}}) & F_{\hat{x}_2}(\bs{\hat{x}}) \hspace{-0.15cm}\phantom{} \\
&
\end{array}
\right)\in\R^{3\times2},
\end{align}
with (componentwise) partial derivatives $F_{\hat{x}_1}(\bs{\hat{x}})\in\R^3$ and $F_{\hat{x}_2}(\bs{\hat{x}})\in\R^3$ with respect to $\hat{x}_1$ and $\hat{x}_2$.

\begin{figure}
\centering
\begin{tikzpicture}[scale = 1.10]
      \draw[thick] (0, 0) -- (2, 0) {};
      \draw[thick] (0, 0) -- (0, 2) {};
      \draw[thick] (2, 0) -- (0, 2) {};
      \node[fill, circle, scale=0.5] at (0, 0) {};
      \node[anchor=north] at (0, 0-0.1) {$\bs{\hat{a}}_1$};
      \node[fill, circle, scale=0.5] at (2, 0) {};
      \node[anchor=north] at (2, 0-0.1) {$\bs{\hat{a}}_2$};
      \node[fill, circle, scale=0.5] at (0, 2) {};
      \node[anchor=south] at (0, 2+0.1) {$\bs{\hat{a}}_3$};
      \node[fill, circle, scale=0.5] at (1, 0) {};
      \node[anchor=north] at (1, 0-0.1) {$\bs{\hat{a}}_4$};
      \node[fill, circle, scale=0.5] at (1, 1) {};
      \node[anchor=west] at (1+0.2, 1) {$\bs{\hat{a}}_5$};
      \node[fill, circle, scale=0.5] at (0, 1) {};
      \node[anchor=east] at (0-0.1, 1) {$\bs{\hat{a}}_6$};
      \node[anchor=north] at (2/3, 3/3) {$\widehat{T}$};
      \draw[thick, ->] (1.65+1, 1) -- (3.65+1, 1) {};
      \node[anchor=north] at (2.65+1, 1-0.1) {$F$};
      \draw[thick] (4+2, 0) -- (6+2, 0) {};
      \draw[thick] (4+2, 0) -- (4+2, 2) {};
      \draw[thick, domain=-1:1, smooth, variable=\theta, black] plot ({6+2*(1-(\theta+1)/2) + 2*2*(2*0.7-1)*(1-(\theta+1)/2)*(\theta+1)/2}, {2*(\theta+1)/2 + 2*2*(2*0.6-1)*(1-(\theta+1)/2)*(\theta+1)/2});
      \node[fill, circle, scale=0.5] at (4+2, 0) {};
      \node[anchor=north] at (4+2, 0-0.1) {$\bs{a}_1$};
      \node[fill, circle, scale=0.5] at (6+2, 0) {};
      \node[anchor=north] at (6+2, 0-0.1) {$\bs{a}_2$};
      \node[fill, circle, scale=0.5] at (4+2, 2) {};
      \node[anchor=south] at (4+2, 2+0.1) {$\bs{a}_3$};
      \node[fill, circle, scale=0.5] at (5+2, 0) {};
      \node[anchor=north] at (5+2, 0-0.1) {$\bs{a}_4$};
      \node[fill, circle, scale=0.5] at (6+2*0.7, 2*0.6) {};
      \node[anchor=west] at (6+2*0.7+0.2, 2*0.6) {$\bs{a}_5$};
      \node[fill, circle, scale=0.5] at (4+2, 1) {};
      \node[anchor=east] at (4-0.1+2, 1) {$\bs{a}_6$};
      \node[anchor=north] at (2/3+4+.2+2, 3/3) {$\mathcal{T}$};
\end{tikzpicture}
\caption{\textit{A quadratic triangle $\mathcal{T}$ is obtained from $\widehat{T}$ via the quadratic map $F$ \cref{eq:quad-map}. Note that $\bs{\hat{a}}_4$, $\bs{\hat{a}}_5$, and $\bs{\hat{a}}_6$ are the midpoints. Again, the $\bs{\hat{a}}_j$'s verify $\varphi_i(\bs{\hat{a}}_j)=\delta_{ij}$, while the $\bs{a}_j$'s verify $\bs{a}_j=F(\bs{\hat{a}}_j)$.}}
\label{fig:curved-tri}
\end{figure}
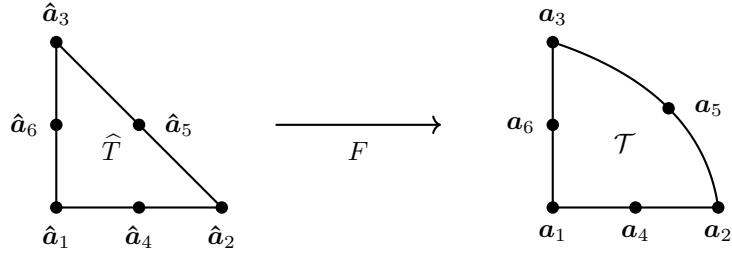

Similarly to what we did for planar triangles, we may write the solid angle as
\begin{align}
\Omega(\bs{x}_0) = \int_{\widehat{T}}\frac{(\bs{x}_0 - F(\bs{\hat{x}}))\cdot\bs{\hat{n}}(\bs{\hat{x}})}{\vert F(\bs{\hat{x}})-\bs{x}_0\vert^3}dS(\bs{\hat{x}}), \quad
\bs{\hat{n}}(\bs{\hat{x}}) = F_{\hat{x}_1}(\bs{\hat{x}})\times F_{\hat{x}_2}(\bs{\hat{x}}),
\end{align}
and decompose $\bs{x}_0$ as the sum of a vector parallel to the tangent plane to the triangle $\mathcal{T}$ at $F(\bs{\hat{x}}_0)$ and a vector orthogonal to that plane,
\begin{align}
\bs{x}_0 = F(\bs{\hat{x}}_0) + h\frac{\bs{\hat{n}}_0}{\vert\bs{\hat{n}}_0\vert}, \quad \bs{\hat{n}}_0 = \bs{\hat{n}}(\bs{\hat{x}}_0).
\end{align}
This yields $\Omega(\bs{\hat{x}}_0, h) = \Omega_{-1}(\bs{\hat{x}}_0, h) + \Omega_{-2}(\bs{\hat{x}}_0, h)$ with
\begin{align}
\Omega_{-1}(\bs{\hat{x}}_0, h) = \int_{\widehat{T}}\frac{(F(\bs{\hat{x}}_0) - F(\bs{\hat{x}}))\cdot\bs{\hat{n}}(\bs{\hat{x}})}{\vert F(\bs{\hat{x}}) - F(\bs{\hat{x}}_0) - h\bs{\hat{n}}_0/\vert\bs{\hat{n}}_0\vert\vert^3}dS(\bs{\hat{x}}), \\
\Omega_{-2}(\bs{\hat{x}}_0, h) = \int_{\widehat{T}}\frac{h\bs{\hat{n}}_0/\vert\bs{\hat{n}}_0\vert\cdot\bs{\hat{n}}(\bs{\hat{x}})}{\vert F(\bs{\hat{x}}) - F(\bs{\hat{x}}_0) - h\bs{\hat{n}}_0/\vert\bs{\hat{n}}_0\vert\vert^3}dS(\bs{\hat{x}}). \nonumber
\end{align}
The term $\Omega_{-1}(\bs{\hat{x}}_0, h)$ is weakly singular, continuous at $h=0$, and does not vanish in general, while the term $\Omega_{-2}(\bs{\hat{x}}_0, h)$ is strongly singular, discontinuous at $h=0$, and has the same limits as \cref{eq:solid-angle-limits-2}. This can be shown by nothing that
\begin{align}
\int_{\widehat{T}}\frac{h\bs{\hat{n}}_0/\vert\bs{\hat{n}}_0\vert\cdot\bs{\hat{n}}(\bs{\hat{x}})}{\vert F(\bs{\hat{x}}) - F(\bs{\hat{x}}_0) - h\bs{\hat{n}}_0/\vert\bs{\hat{n}}_0\vert\vert^3}dS(\bs{\hat{x}}) 
= \int_{\widehat{T}}\frac{h\vert\bs{\hat{n}}_0\vert}{\left[\vert J_0(\bs{\hat{x}} - \bs{\hat{x}}_0)\vert^2+h^2\right]^{\frac{3}{2}}}dS(\bs{\hat{x}}) + \OO(h),
\end{align}
where $J_0=J(\bs{\hat{x}}_0)$. Again, we may set $\Omega_{-2}(\bs{\hat{x}}_0,0)=0$ for all $\bs{\hat{x}}_0\in\widehat{T}$; see \cref{tab:solid-angle-curved}. The solid angle is mathematically weakly singular but numerically strongly singular, as we will see in \cref{sec:numerics}.

\begin{table}
\caption{\textit{Limiting values of the solid angle $\Omega(\bs{\hat{x}}_0,h) = \Omega_{-1}(\bs{\hat{x}}_0,h) + \Omega_{-2}(\bs{\hat{x}}_0,h)$ for curved triangles.}}
\label{tab:solid-angle-curved}
\centering
\ra{1.3}
\begin{tabular}{c|c|c}
\toprule
$h\to0^+$ & $h\to0^-$ & $h=0$ \\
\midrule
$\Omega_{-1}(\bs{\hat{x}}_0,0^+)\neq0$ in general & $\Omega_{-1}(\bs{\hat{x}}_0,0^-)\neq0$ in general & $\Omega_{-1}(\bs{\hat{x}}_0,0)\neq0$ in general \\
values of $\Omega_{-2}(\bs{\hat{x}}_0,0^+)$: & values of $\Omega_{-2}(\bs{\hat{x}}_0,0^-)$: & values of $\Omega_{-2}(\bs{\hat{x}}_0,0)$: \\
\begin{tikzpicture}[scale = 0.9]
      \draw[pattern=north west lines, pattern color=lightgray, draw=none] (-1,-1) -- (3,-1) -- (3,0) -- (0,0) -- (0,2) -- (2,0) -- (3,0) -- (3,3) -- (-1,3) -- (-1,-1);
      \draw[thick] (0, 0) -- (2, 0) {};
      \draw[thick] (0, 0) -- (0, 2) {};
      \draw[thick] (2, 0) -- (0, 2) {};
      \node[fill, circle, scale=0.5] at (0, 0) {};
      \node[anchor=north] at (0, 0-0.1) {$\phi_1$};
      \node[fill, circle, scale=0.5] at (2, 0) {};
      \node[anchor=north] at (2, 0-0.1) {$\phi_2$};
      \node[fill, circle, scale=0.5] at (0, 2) {};
      \node[anchor=south] at (0, 2+0.1) {$\phi_3$};
      \node[anchor=north] at (1, 0-0.1) {$\pi$};
      \node[anchor=west] at (1+0.2, 1) {$\pi$};
      \node[anchor=east] at (0-0.1, 1) {$\pi$};
      \node[anchor=north] at (2/3, 3/3) {$2\pi$};
      \node[anchor=north, color=lightgray] at (2, 2.5) {\Large{$0$}};
      \node[anchor=north] at (0,3.15) {$\phantom{1}$};
\end{tikzpicture} 
& \begin{tikzpicture}[scale = 0.9]
      \draw[pattern=north west lines, pattern color=lightgray, draw=none] (-1,-1) -- (3,-1) -- (3,0) -- (0,0) -- (0,2) -- (2,0) -- (3,0) -- (3,3) -- (-1,3) -- (-1,-1);
      \draw[thick] (0, 0) -- (2, 0) {};
      \draw[thick] (0, 0) -- (0, 2) {};
      \draw[thick] (2, 0) -- (0, 2) {};
      \node[fill, circle, scale=0.5] at (0, 0) {};
      \node[anchor=north] at (0, 0-0.1) {$-\phi_1$};
      \node[fill, circle, scale=0.5] at (2, 0) {};
      \node[anchor=north] at (2, 0-0.1) {$-\phi_2$};
      \node[fill, circle, scale=0.5] at (0, 2) {};
      \node[anchor=south] at (0, 2+0.1) {$-\phi_3$};
      \node[anchor=north] at (1, 0-0.1) {$-\pi$};
      \node[anchor=west] at (1+0.2, 1) {$-\pi$};
      \node[anchor=east] at (0-0.1, 1) {$-\pi$};
      \node[anchor=north] at (2/3, 3/3) {$-2\pi$};
      \node[anchor=north, color=lightgray] at (2, 2.5) {\Large{$0$}};
\end{tikzpicture} 
& \begin{tikzpicture}[scale = 0.9]
      \draw[pattern=north west lines, pattern color=lightgray, draw=none] (-1,-1) -- (3,-1) -- (3,3) -- (-1,3);
      \draw[thick] (0, 0) -- (2, 0) {};
      \draw[thick] (0, 0) -- (0, 2) {};
      \draw[thick] (2, 0) -- (0, 2) {};
      \node[fill, circle, scale=0.5] at (0, 0) {};
      \node[fill, circle, scale=0.5] at (2, 0) {};
      \node[fill, circle, scale=0.5] at (0, 2) {};
      \node[anchor=north, color=lightgray] at (2, 2.5) {\Large{$0$}};
\end{tikzpicture} \\
\bottomrule
\end{tabular}
\end{table}

%%%%%%%%%%%%%%%%%%%%%%%%%%%%%%%%%%%%%%%%%%%%%%%%%%%%%%%%%%%%%%%%%%%%%%%%%%%%%%%%%%%%%%%%%%%%%%%%%%%%%%%%%%%%%%%%%%%%%%%%%%%%%%%%%%%%%%%%%%%%%%%%%%%%%%
\section{Computing strongly singular and near-singular integrals}\label{sec:algorithms}

To compute strongly singular and near-singular integrals of the form of \cref{eq:intsing2-double-layer}, we proceed in five steps, as in \cite{montanelli2022} for the computation of weakly singular and near-singular integrals. We quickly review the five steps, highlighting some of the differences; for details, we refer to \cite[sect.~2]{montanelli2022}.

%%%%%%%%%%%%%%%%%%%%%%%%%%%%%%%%%%%%%%%%%%%%%%%%%%%%%%%%%%%%%%%%%%%%%%%%%%%%%%%%%%%%%%%%%%%%%%%%%%%%%%%%%%%%%%%%%%%%%%%%%%%%%%%%%%%%%%%%%%%%%%%%%%%%%%
\paragraph{Step 1.~Mapping back} We map $\mathcal{T}$ back to the reference element $\widehat{T}$. The integral \cref{eq:intsing2-double-layer} becomes
\begin{align}\label{eq:step1}
I(\bs{x}_0) = \int_{\widehat{T}}\frac{(F(\bs{\hat{x}})-\bs{x}_0)\cdot\bs{\hat{n}}(\bs{\hat{x}})}{\vert F(\bs{\hat{x}})-\bs{x}_0\vert^3}\varphi(\bs{\hat{x}})dS(\bs{\hat{x}}), \quad
\bs{\hat{n}}(\bs{\hat{x}}) = F_{\hat{x}_1}(\bs{\hat{x}})\times F_{\hat{x}_2}(\bs{\hat{x}}).
\end{align}

%%%%%%%%%%%%%%%%%%%%%%%%%%%%%%%%%%%%%%%%%%%%%%%%%%%%%%%%%%%%%%%%%%%%%%%%%%%%%%%%%%%%%%%%%%%%%%%%%%%%%%%%%%%%%%%%%%%%%%%%%%%%%%%%%%%%%%%%%%%%%%%%%%%%%%
\paragraph{Step 2.~Locating the singularity} We compute $\bs{\hat{x}}_0\in\R^2$ such that $F(\bs{\hat{x}}_0)\in\mathcal{T}$ is the closest point to $\bs{x}_0$ on $\mathcal{T}$ (this is done with numerical optimization). We then decompose $\bs{x}_0$ as
\begin{align}\label{eq:step2}
\bs{x}_0 = F(\bs{\hat{x}}_0) + h\frac{\bs{\hat{n}}_0}{\vert\bs{\hat{n}}_0\vert}, \quad \bs{\hat{n}}_0 = \bs{\hat{n}}(\bs{\hat{x}}_0),
\end{align}
for some scalar $h$, which may be negative, obtained via
\begin{align}\label{eq:h}
h = (\bs{x}_0 - F(\bs{\hat{x}}_0)) \cdot \frac{\bs{\hat{n}}_0}{\vert\bs{\hat{n}}_0\vert}.
\end{align}

%%%%%%%%%%%%%%%%%%%%%%%%%%%%%%%%%%%%%%%%%%%%%%%%%%%%%%%%%%%%%%%%%%%%%%%%%%%%%%%%%%%%%%%%%%%%%%%%%%%%%%%%%%%%%%%%%%%%%%%%%%%%%%%%%%%%%%%%%%%%%%%%%%%%%%
\paragraph{Step 3.~Taylor expanding/subtracting} We compute the strongly singular term in \cref{eq:step1},
\begin{align}\label{eq:Tn2}
T_{-2}(\bs{\hat{x}},h) = -\frac{h\varphi_0\vert\bs{\hat{n}}_0\vert}{\left[\vert J_0(\bs{\hat{x}} - \bs{\hat{x}}_0)\vert^2 + h^2\right]^{\frac{3}{2}}},
\end{align}
where $J_0$ is the Jacobian matrix at $\bs{\hat{x}}_0$ and $\varphi_0=\varphi(\bs{\hat{x}}_0)$, as well as the weakly singular term $T_{-1}$, whose expression will be derived in \cref{sec:step3}. We subtract them from/add them to \cref{eq:step1},
\begin{align}\label{eq:step3}
I(\bs{x}_0) = \, & \int_{\widehat{T}}\left[\frac{(F(\bs{\hat{x}})-\bs{x}_0)\cdot\bs{\hat{n}}(\bs{\hat{x}})}{\vert F(\bs{\hat{x}})-\bs{x}_0\vert^3}\varphi(\bs{\hat{x}}) - T_{-2}(\bs{\hat{x}},h) - T_{-1}(\bs{\hat{x}},h)\right]dS(\bs{\hat{x}}) \\
& + \int_{\widehat{T}}T_{-2}(\bs{\hat{x}},h)dS(\bs{\hat{x}}) + \int_{\widehat{T}}T_{-1}(\bs{\hat{x}},h)dS(\bs{\hat{x}}). \nonumber
\end{align}
The first integral is regularized---it may be computed with Gauss quadrature on triangles \cite{lether1976}. The last two integrals are singular or near-singular and will be computed in steps 4--5. 

%%%%%%%%%%%%%%%%%%%%%%%%%%%%%%%%%%%%%%%%%%%%%%%%%%%%%%%%%%%%%%%%%%%%%%%%%%%%%%%%%%%%%%%%%%%%%%%%%%%%%%%%%%%%%%%%%%%%%%%%%%%%%%%%%%%%%%%%%%%%%%%%%%%%%%
\paragraph{Step 4.~Continuation approach} Let
\begin{align}\label{eq:In2-In1-2D}
I_{-2}(h) = \int_{\widehat{T}}T_{-2}(\bs{\hat{x}},h)dS(\bs{\hat{x}}), \quad I_{-1}(h) = \int_{\widehat{T}}T_{-1}(\bs{\hat{x}},h)dS(\bs{\hat{x}}).
\end{align}
Since $T_{-2}$ and $T_{-1}$ are homogeneous in $\bs{\hat{x}}$ and $h$, using the continuation approach~\cite{rosen1995}, we reduce the 2D integrals in \cref{eq:In2-In1-2D} to a sum of three 1D integrals along the edges of the shifted triangle $\widehat{T}-\bs{\hat{x}}_0$. For instance, for $I_{-2}$, this yields
\begin{align}\label{eq:In2-1D}
I_{-2}(h) = -\mrm{sign}(h)\varphi_0\vert\bs{\hat{n}}_0\vert\sum_{j=1}^3\hat{s}_j\int_{\partial\widehat{T}_j-\bs{\hat{x}}_0}\frac{\sqrt{\nrm^2 + h^2}-\vert h\vert}{\nrm^2\sqrt{\nrm^2 + h^2}}ds(\bs{\hat{x}}),
\end{align}
where the $\hat{s}_j$'s are the distances from the origin to the edges of $\widehat{T}-\bs{\hat{x}}_0$; see \cref{fig:shifted-tri}. We will provide the rigorous derivation of \cref{eq:In2-1D} in \cref{sec:step4}, as well as a formula for $I_{-1}$. We emphasize that \cref{eq:In2-1D} is equivalent to the solid angle formula \cref{eq:solid-angle}. The advantages of \cref{eq:In2-1D} are that it is applicable to other types of boundary elements, including quadrilateral elements, and that the methodology to derive it extends to elasticity potentials; see \cref{sec:elasticity}. There is a price to pay, however, as the formula requires the computation of 1D integrals. Nevertheless, since these integrals may be efficiently and exponentially accurately evaluated, their computational cost is negligible.

\begin{figure}
\centering
\begin{tikzpicture}[scale = 0.80]
      \draw[thick] (2*0, 2*0) -- (2*0, 2*2) {};
      \draw[thick] (2*0, 2*0) -- (2*2, 2*0) {};
      \draw[thick] (2*0, 2*2) -- (2*2, 2*0) {};
      \node[fill, circle, scale=0.5] at (2*0, 2*0) {};
      \node[fill, circle, scale=0.5] at (2*2, 2*0) {};
      \node[fill, circle, scale=0.5] at (2*0, 2*2) {};
      \node[fill, circle, scale=0.5] at (2*2/3, 2*2/3) {};
      \node[anchor=west] at (2*2/3+0.1, 2*2/3-0.1) {$\bs{0}$};
      \node[anchor=west] at (0.7, 0.5) {$\hat{s}_1$};
      \node[anchor=west] at (1+0.1, 2) {$\hat{s}_2$};
      \node[anchor=west] at (0.25, 1.65) {$\hat{s}_3$};
      \draw[thick, dashed] (2*2/3, 2*2/3) -- (2, 2) {};
      \draw[thick, dashed] (2*2/3, 2*2/3) -- (0, 1+1/3) {};
      \draw[thick, dashed] (2*2/3, 2*2/3) -- (1+1/3, 0) {};
      \draw[->, thick, red] (0, 4)--(-1.41421356237, 4);      
      \draw[->, thick, red] (0, 0)--(0,-1.41421356237);      
  	  \draw[->, thick, red] (4, 0)--(5, 1);      
  	  \node[anchor=west, red] at (0+0.05,-1.41421356237) {$\bs{\hat{\nu}}_1$};
  	  \node[anchor=east, red] at (5-0.1, 1) {$\bs{\hat{\nu}}_2$};
  	  \node[anchor=north, red] at (-1.41421356237, 4-0.05) {$\bs{\hat{\nu}}_3$};
      \node[anchor=east] at (0-0.1, 0-0.1) {$\bs{\hat{a}}_1-\bs{\hat{x}}_0=(-\hat{x}_0,-\hat{y}_0)$};
      \node[anchor=west] at (4+0.1, 0-0.1) {$\bs{\hat{a}}_2-\bs{\hat{x}}_0=(1-\hat{x}_0,-\hat{y}_0)$};
      \node[anchor=west] at (0+0.1, 4+0.1) {$\bs{\hat{a}}_3-\bs{\hat{x}}_0=(-\hat{x}_0,1-\hat{y}_0)$};
      \node[anchor=north] at (2, 0-0.1) {$\partial\widehat{T}_1-\bs{\hat{x}}_0$};
      \node[anchor=west] at (2.4+0.15, 1.6+0.15) {$\partial\widehat{T}_2-\bs{\hat{x}}_0$};
      \node[anchor=east] at (0-0.1, 2) {$\partial\widehat{T}_3-\bs{\hat{x}}_0$};
\end{tikzpicture}
\caption{\textit{The triangle $\widehat{T}-\bs{\hat{x}}_0$ above is the reference triangle $\widehat{T}$ of \cref{fig:planar-tri} shifted by $\bs{\hat{x}}_0=(\hat{x}_0,\hat{y}_0)$. The (signed) distances $\hat{s}_j$ to the edges are given by $\hat{s}_1=\hat{y}_0$, $\hat{s}_2=\sqrt{2}/2(1-\hat{x}_0-\hat{y}_0)$, and $\hat{s}_3=\hat{x}_0$.}}
\label{fig:shifted-tri}
\end{figure}
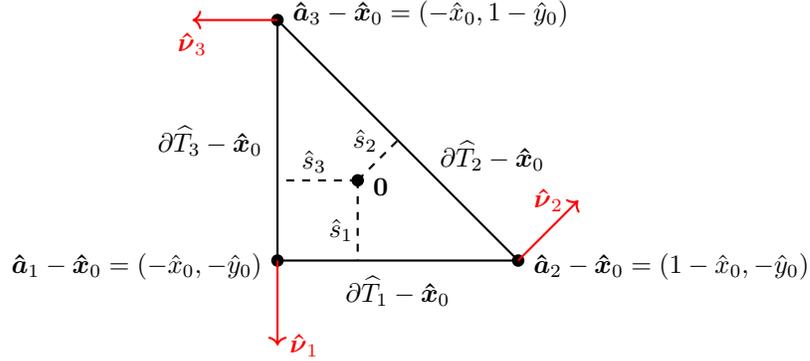

%%%%%%%%%%%%%%%%%%%%%%%%%%%%%%%%%%%%%%%%%%%%%%%%%%%%%%%%%%%%%%%%%%%%%%%%%%%%%%%%%%%%%%%%%%%%%%%%%%%%%%%%%%%%%%%%%%%%%%%%%%%%%%%%%%%%%%%%%%%%%%%%%%%%%%
\paragraph{Step 5.~Transplanted Gauss quadrature} To circumvent near-singularity issues in the computation of $I_{-2}$ and $I_{-1}$, we employ transplanted quadrature \cite{hale2008} when the singularity is close to the edges. As in the companion paper \cite{montanelli2022}, we take advantage of the a priori knowledge of the singularities to utilize transplanted rules with significantly improved convergence rates \cite[Thm.~2.1]{montanelli2022}.

We now give more details about steps 3 and 4.

%%%%%%%%%%%%%%%%%%%%%%%%%%%%%%%%%%%%%%%%%%%%%%%%%%%%%%%%%%%%%%%%%%%%%%%%%%%%%%%%%%%%%%%%%%%%%%%%%%%%%%%%%%%%%%%%%%%%%%%%%%%%%%%%%%%%%%%%%%%%%%%%%%%%%%
\subsection{Taylor expanding/subtracting (step 3)}\label{sec:step3}

Let $\rho$ be the diameter of the triangle $\mathcal{T}$, defined as the largest distance between two points on $\mathcal{T}$. We assume that the parameter of near-singularity $h$ in \cref{eq:h} is much smaller than $\rho$. (When $h\sim\rho$, the integral is analytic and Gauss quadrature converges exponentially---there is no need for any of this.) To compute $T_{-2}$ and $T_{-1}$, we first derive an expansion for $R=\vert F(\bs{\hat{x}})-\bs{x}_0\vert$. Let $\delta\hat{\bs{x}}=\bs{\hat{x}}-\bs{\hat{x}}_0=(\delta\hat{x}_1,\delta\hat{x}_2)$ and $\delta\hat{x}=\vert\delta\bs{\hat{x}}\vert$. Then,
\begin{align}\label{eq:Rn3}
R^{-3} = R_1^{-3} - \frac{3}{2}hA_2R_1^{-5} - \frac{3}{2}C_3R_1^{-5} + \OO(\delta\hat{x}^{-1}),
\end{align}
with $R_1=\sqrt{\vert J_0\delta\bs{\hat{x}}\vert^2 + h^2}=\OO(\delta\hat{x})$, so that $R_1^{-3} = \OO(\delta\hat{x}^{-3})$, and
\begin{align}
hA_2 = h\sum_{i=1}^3a_i\delta\hat{x}_1^{3-i}\delta\hat{x}_2^{i-1},
\quad C_3 = \sum_{i=1}^4c_i\delta \hat{x}_1^{4-i}\delta\hat{x}_2^{i-1}.
\end{align}
Since $h\ll\rho$, we have that $hA_2\ll R_1^2$, which implies that $hA_2R_1^{-5}$ and $C_3R_1^{-5}$ both are $\OO(\delta\hat{x}^{-2})$. (In the equations above and in what follows, the subscript $i$ in variables like $A_2$ and $C_3$ means that it is $\OO(\delta\hat{x}^i)$.) The formula \cref{eq:Rn3} regroups $r$-homogeneous functions in $(\delta\bs{\hat{x}},h)$ for increasing values of $r$ ($r=-3$ for $R_1^{-3}$ and $r=-2$ for the next two); the $a_i$'s and $c_i$'s are given in \cite[App.~A]{montanelli2022}. The integrand in \cref{eq:step1} that we wish to regularize may be written as $VR^{-3}$ with
\begin{align}
V(\bs{\hat{x}}) = (F(\bs{\hat{x}}) - F(\bs{\hat{x}}_0) - h\bs{\hat{n}}_0/\vert\bs{\hat{n}}_0\vert)\cdot\varphi(\bs{\hat{x}})\bs{\hat{n}}(\bs{\hat{x}}) = P(\bs{\hat{x}}) \cdot Q(\bs{\hat{x}}).
\end{align}
We write the Taylor series of $V$ as $V(\bs{\hat{x}}) = V_0 + V_1 + V_2(\bs{\hat{x}}) + \OO(\delta\hat{x}^3)$ with $V_0=0=\OO(1)$ and
\begin{align}\label{eq:V}
V_1 = -h\varphi_0\vert\bs{\hat{n}}_0\vert = \OO(\delta\hat{x}^1),
\quad V_2(\bs{\hat{x}}) = P_2(\bs{\hat{x}})\cdot Q_0 + P_1(\bs{\hat{x}})\cdot Q_1(\bs{\hat{x}}) = \OO(\delta\hat{x}^2).
\end{align}
The functions in \cref{eq:V} are $Q_0 = \varphi_0\bs{\hat{n}}_0$, $P_1(\bs{\hat{x}}) = J_0(\bs{\hat{x}} - \bs{\hat{x}}_0) - h\bs{\hat{n}}_0/\vert\bs{\hat{n}}_0\vert$, and
\begin{align}
& P_2(\bs{\hat{x}}) = \frac{1}{2}F_{\hat{x}_1\hat{x}_1}\delta\hat{x}_1^2 + \frac{1}{2}F_{\hat{x}_2\hat{x}_2}\delta\hat{x}_2^2 + F_{\hat{x}_1\hat{x}_2}\delta\hat{x}_1\delta\hat{x}_2, \\
& Q_1(\bs{\hat{x}}) = \left[\varphi_{\hat{x}_1}\delta\hat{x}_1+\varphi_{\hat{x}_2}\delta\hat{x}_2\right]\bs{\hat{n}}_0 + \varphi_0\left[\bs{\hat{n}}_{\hat{x}_1}\delta\hat{x}_1+\bs{\hat{n}}_{\hat{x}_2}\delta\hat{x}_2\right]. \nonumber
\end{align}
(The derivatives are evaluated at $\bs{\hat{x}}_0$.) Note that $\bs{\hat{n}}(\hat{\bs{x}}) = F_{\hat{x}_1}(\bs{\hat{x}})\times F_{\hat{x}_2}(\bs{\hat{x}})$ and hence
\begin{align}
& \bs{\hat{n}}_{\hat{x}_1}(\hat{\bs{x}}) = F_{\hat{x}_1\hat{x}_1}(\bs{\hat{x}})\times F_{\hat{x}_2}(\bs{\hat{x}}) + F_{\hat{x}_1}(\bs{\hat{x}})\times F_{\hat{x}_1\hat{x}_2}(\bs{\hat{x}}), \\
& \bs{\hat{n}}_{\hat{x}_2}(\hat{\bs{x}}) = F_{\hat{x}_1\hat{x}_2}(\bs{\hat{x}})\times F_{\hat{x}_2}(\bs{\hat{x}}) + F_{\hat{x}_1}(\bs{\hat{x}})\times F_{\hat{x}_2\hat{x}_2}(\bs{\hat{x}}). \nonumber
\end{align}
We conclude by writing $T_{-2} = V_1R_1^{-3}$ and $T_{-1} = V_2R_1^{-3} - \frac{3}{2}hV_1A_2R_1^{-5} - \frac{3}{2}V_1C_3R_1^{-5}$, i.e.,
\begin{align}\label{eq:Tn1}
T_{-1}(\bs{\hat{x}},h) = \, & \frac{P_2(\bs{\hat{x}})\cdot\varphi_0\bs{\hat{n}}_0 + \left[J_0(\bs{\hat{x}}-\bs{\hat{x}}_0)-h\bs{\hat{n}}_0/\vert\bs{\hat{n}}_0\vert\right]\cdot Q_1(\bs{\hat{x}})}{\left[\vert J_0(\bs{\hat{x}} - \bs{\hat{x}}_0)\vert^2 + h^2\right]^{\frac{3}{2}}} \\
& +\frac{3}{2}\sum_{i=1}^3a_i\delta\hat{x}_1^{3-i}\delta\hat{x}_2^{i-1}\frac{h^2\varphi_0\vert\bs{\hat{n}}_0\vert}{\left[\vert J_0(\bs{\hat{x}} - \bs{\hat{x}}_0)\vert^2 + h^2\right]^{\frac{5}{2}}} \nonumber \\
& +\frac{3}{2}\sum_{i=1}^4c_i\delta \hat{x}_1^{4-i}\delta\hat{x}_2^{i-1}\frac{h\varphi_0\vert\bs{\hat{n}}_0\vert}{\left[\vert J_0(\bs{\hat{x}} - \bs{\hat{x}}_0)\vert^2 + h^2\right]^{\frac{5}{2}}}. \nonumber
\end{align}
For planar triangles, the formula \cref{eq:Tn1} simplifies to
\begin{align}\label{eq:Tn1-planar}
T_{-1}(\bs{\hat{x}},h) = -h\vert\bs{\hat{n}}\vert\frac{\varphi_{\hat{x}_1}\delta\hat{x}_1+\varphi_{\hat{x}_2}\delta\hat{x}_2}{\left[\vert J(\bs{\hat{x}} - \bs{\hat{x}}_0)\vert^2 + h^2\right]^{\frac{3}{2}}} \quad\quad \text{(planar triangles)},
\end{align}
where the normal $\bs{\hat{n}}$ and the Jacobian $J$ are independent of $\bs{\hat{x}}_0$. We numerically validated the somewhat intricate formula \cref{eq:Tn1} and its simpler version \cref{eq:Tn1-planar} by plotting the regularizaition
\begin{align*}
\frac{(F(\bs{\hat{x}})-\bs{x}_0)\cdot\bs{\hat{n}}(\bs{\hat{x}})}{\vert F(\bs{\hat{x}})-\bs{x}_0\vert^3}\varphi(\bs{\hat{x}}) - T_{-2}(\bs{\hat{x}},h) - T_{-1}(\bs{\hat{x}},h)
\end{align*}
and checking that it remains bound throughout the domain for various values of $\bs{\hat{x}}_0$.

%%%%%%%%%%%%%%%%%%%%%%%%%%%%%%%%%%%%%%%%%%%%%%%%%%%%%%%%%%%%%%%%%%%%%%%%%%%%%%%%%%%%%%%%%%%%%%%%%%%%%%%%%%%%%%%%%%%%%%%%%%%%%%%%%%%%%%%%%%%%%%%%%%%%%%
\subsection{The continuation approach (step 4)}\label{sec:step4}

We review the continuation approach for strong singularities, which yields the 1D formula \cref{eq:In2-1D} for $I_{-2}$. We also derive a 1D formula for $I_{-1}$.

Suppose $f$ is positive homogeneous in both $\bs{x}$ and $h$, i.e., there exists an integer $r$ such that $f(\lambda\bs{\hat{x}},\lambda h) = \lambda^r f(\bs{\hat{x}},h)$, for all $\bs{\hat{x}}$, $h\geq0$ and $\lambda>0$, and that we want to compute
\begin{align}
I(h) = \int_{\Omega}f(\bs{\hat{x}},h)dS(\bs{\hat{x}}),
\end{align}
for some bounded $\Omega$. The continuation approach yields the differential equation \cite[Eqn.~(2.3)]{rosen1995}
\begin{align}
hI'(h) - (r+2)I(h) = -\int_{\partial\Omega} f(\bs{\hat{x}},h)\bs{\hat{x}}\cdot\bs{\hat{\nu}}(\bs{\hat{x}})ds(\bs{\hat{x}}),
\end{align}
where $\bs{\hat{\nu}}(\bs{\hat{x}})$ is the normal along the boundary of $\Omega$. It can be integrated analytically to get
\begin{align}
I(h) = h^{r+2}\int_{\partial\Omega}\bs{\hat{x}}\cdot\bs{\hat{\nu}}(\bs{\hat{x}})\int_h^{\mrm{sign}(h)\infty}\frac{f(\bs{\hat{x}},u)}{u^{r+3}}du\,ds(\bs{\hat{x}}).
\end{align}

%%%%%%%%%%%%%%%%%%%%%%%%%%%%%%%%%%%%%%%%%%%%%%%%%%%%%%%%%%%%%%%%%%%%%%%%%%%%%%%%%%%%%%%%%%%%%%%%%%%%%%%%%%%%%%%%%%%%%%%%%%%%%%%%%%%%%%%%%%%%%%%%%%%%%%
\paragraph{Strong singularities} For strong singularities we have $r=-2$, which leads to
\begin{align}\label{eq:I}
I(h) = \int_{\partial\Omega}\bs{\hat{x}}\cdot\bs{\hat{\nu}}(\bs{\hat{x}})\int_h^{\pm\infty}\frac{f(\bs{\hat{x}},u)}{u}du\,ds(\bs{\hat{x}}).
\end{align}
Let $F$ denote the indefinite integral
\begin{align}
F(\bs{\hat{x}}, h) = \int_0^h\frac{f(\bs{\hat{x}},u)}{u}du,
\end{align}
and write $F_{\pm\infty}(\bs{\hat{x}})=F(\bs{\hat{x}},\pm\infty)$. We also define the residues\footnote{Both residues are path independent, so long as the path encloses the singularity.} of $f$ and $F_{\pm\infty}$ via
\begin{align}
\mrm{Res}(f) = \int_{\partial\Omega} f(\bs{\hat{x}},0)\bs{\hat{x}}\cdot\bs{\hat{\nu}}(\bs{\hat{x}})ds(\bs{\hat{x}}), \quad
\mrm{Res}(F_{\pm\infty}) = \int_{\partial\Omega} F_{\pm\infty}(\bs{\hat{x}})\bs{\hat{x}}\cdot\bs{\hat{\nu}}(\bs{\hat{x}})ds(\bs{\hat{x}}).
\end{align}
The formula \cref{eq:I} can then be rewritten as
\begin{align}
I(h) = \int_{\partial\Omega}\left[F_{\pm\infty}(\bs{\hat{x}})-F(\bs{\hat{x}}, h)\right]\bs{\hat{x}}\cdot\bs{\hat{\nu}}(\bs{\hat{x}})ds(\bs{\hat{x}})
= \mrm{Res}(F_{\pm\infty}) - \int_{\partial\Omega}F(\bs{\hat{x}}, h)\bs{\hat{x}}\cdot\bs{\hat{\nu}}(\bs{\hat{x}})ds(\bs{\hat{x}}),
\end{align}
and the singular integral is
\begin{align}
I(0^{\pm}) = \lim_{h\to0^{\pm}}I(h) = \mrm{Res}(F_{\pm\infty}) - \lim_{h\to0^{\pm}}\int_{\partial\Omega}F(\bs{\hat{x}}, h)\bs{\hat{x}}\cdot\bs{\hat{\nu}}(\bs{\hat{x}})ds(\bs{\hat{x}}).
\end{align}
The values of $I(0^{\pm})$ can be either bounded or unbounded. Suppose that $f$ is of the form 
\begin{align}
f(\bs{\hat{x}}, h) = \frac{\hat{x}_1^{\ell_1}\hat{x}_2^{\ell_2}h^{\ell}}{(\vert\bs{\hat{x}}\vert^2 + h^2)^{\frac{m}{2}}}, \quad r = \ell_1 + \ell_2 + \ell - m = -2,
\end{align}
for some integers $\ell_1,\ell_2,\ell,m\geq0$. We have the following theorem \cite[Thm.~4]{rosen1995}.

\begin{theorem}[Continuation for strong singularities]\label{thm:continuation}
If $\ell=0$, then $F_{\pm\infty}=0$ and therefore
\begin{align}
I(h) = -\int_{\partial\Omega}F(\bs{\hat{x}}, h)\bs{\hat{x}}\cdot\bs{\hat{\nu}}(\bs{\hat{x}})ds(\bs{\hat{x}}).
\end{align}
If $\mrm{Res}(f)=0$, then $I(0)$ is bounded and coincides with the Cauchy principal value on the boundary,
\begin{align}
I(0) = -\lim_{h\to0}\int_{\partial\Omega}F(\bs{\hat{x}}, h)\bs{\hat{x}}\cdot\bs{\hat{\nu}}(\bs{\hat{x}})ds(\bs{\hat{x}}) = \int_{\partial\Omega}f(\bs{\hat{x}},0)\log\left(\vert\bs{\hat{x}}\vert\right)\bs{\hat{x}}\cdot\bs{\hat{\nu}}(\bs{\hat{x}})ds(\bs{\hat{x}}).
\end{align}
If $\mrm{Res}(f)\neq0$, then $I(0)$ is unbounded and the formula
\begin{align}
I(0) = -\lim_{h\to0}\int_{\partial\Omega}F(\bs{\hat{x}}, h)\bs{\hat{x}}\cdot\bs{\hat{\nu}}(\bs{\hat{x}})ds(\bs{\hat{x}})
\end{align}
will generate a finite part (the Cauchy principal value), as well as an infinite part with asymptotics of the form $\mrm{Res}(f)\log\vert h\vert$. In both cases, the integral is continuous at $h=0$.

If $\ell\neq0$ is odd, then $F_\infty\neq0$, and $\mrm{Res}(F_\infty) = -\mrm{Res}(F_{-\infty})\neq0$ in general. The integral reads
\begin{align}
I(h) = \pm\mrm{Res}(F_\infty) - \int_{\partial\Omega}F(\bs{\hat{x}}, h)\bs{\hat{x}}\cdot\bs{\hat{\nu}}(\bs{\hat{x}})ds(\bs{\hat{x}}),
\end{align}
with bounded value $I(0^{\pm}) = \pm\mrm{Res}(F_\infty)$, which is discontinuous at $h=0$. 

If $\ell\neq0$ is even, then $F_\infty\neq0$ but $\mrm{Res}(F_{\pm\infty})=0$. The integral is given by
\begin{align}
I(h) = - \int_{\partial\Omega}F(\bs{\hat{x}}, h)\bs{\hat{x}}\cdot\bs{\hat{\nu}}(\bs{\hat{x}})ds(\bs{\hat{x}}),
\end{align}
with bounded and continuous value $I(0)$.
\end{theorem}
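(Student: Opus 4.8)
The plan is to derive everything from the master identity $I(h) = \mrm{Res}(F_{\pm\infty}) - \int_{\partial\Omega}F(\bs{\hat{x}},h)\,\bs{\hat{x}}\cdot\bs{\hat{\nu}}(\bs{\hat{x}})\,ds(\bs{\hat{x}})$ established just above the statement, together with an explicit evaluation of the one-dimensional radial integral $F$. Substituting the monomial form of $f$ and pulling out the $\hat{x}_1^{\ell_1}\hat{x}_2^{\ell_2}$ factor reduces $F(\bs{\hat{x}},h)$ to $\hat{x}_1^{\ell_1}\hat{x}_2^{\ell_2}\int_0^h u^{\ell-1}(|\bs{\hat{x}}|^2+u^2)^{-m/2}\,du$; the rescaling $u=|\bs{\hat{x}}|t$ then gives $F_{\infty}(\bs{\hat{x}}) = B_\ell\,\hat{x}_1^{\ell_1}\hat{x}_2^{\ell_2}|\bs{\hat{x}}|^{\ell-m}$, where $B_\ell=\int_0^\infty t^{\ell-1}(1+t^2)^{-m/2}\,dt$ is a Beta integral (convergent exactly when $\ell\geq1$, since $\ell<m$ follows from $r=-2$). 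The same substitution, now with $u\mapsto-u$, yields the symmetry $F_{-\infty}=(-1)^\ell F_{\infty}$, which is the single fact that decides continuity versus discontinuity at $h=0$: for $\ell$ even the two normalizations agree, for $\ell$ odd they are opposite.

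Next I would dispose of the two residues by the homogeneity/divergence argument underlying the path-independence footnote. Since $r=-2$, both $f(\bs{\hat{x}},0)$ and $F_{\pm\infty}$ are homogeneous of degree $-2$ in $\bs{\hat{x}}$ (note $f(\bs{\hat{x}},0)$ vanishes identically unless $\ell=0$, so $\mrm{Res}(f)$ only plays a role there), so by Euler's identity the planar vector field $(\,\cdot\,)\bs{\hat{x}}$ is divergence-free off the origin; the residues are therefore path-independent and may be computed on a small circle, collapsing to the angular integral $\int_0^{2\pi}\cos^{\ell_1}\theta\,\sin^{\ell_2}\theta\,d\theta$ (times $B_\ell$ for $F_\infty$). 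This integral vanishes unless both $\ell_1$ and $\ell_2$ are even. Feeding in the constraint $\ell_1+\ell_2+\ell-m=-2$ with $m$ odd—the case of the geometric kernels, where $m=3,5,\dots$—forces $\ell_1+\ell_2$ odd whenever $\ell$ is even, killing the residue, and allows $\ell_1+\ell_2$ even, hence a generically nonzero residue, whenever $\ell$ is odd. This is exactly the even/odd dichotomy in the statement, and combined with $F_{-\infty}=(-1)^\ell F_\infty$ it gives $\mrm{Res}(F_\infty)=-\mrm{Res}(F_{-\infty})$ in the odd case.

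With the residues in hand the $\ell\neq0$ cases are immediate: because $\ell\geq1$ makes the radial integrand integrable at $u=0$, one has $F(\bs{\hat{x}},h)\to0$ as $h\to0^{\pm}$, so $I(0^{\pm})=\mrm{Res}(F_{\pm\infty})$. For $\ell$ odd this reads $I(0^{\pm})=\pm\mrm{Res}(F_\infty)$, bounded but with a jump of $2\,\mrm{Res}(F_\infty)$; for $\ell$ even both residues vanish and $I(h)=-\int_{\partial\Omega}F\,\bs{\hat{x}}\cdot\bs{\hat{\nu}}\,ds$ is bounded and continuous. The genuinely delicate case is $\ell=0$, where $B_0$ diverges logarithmically at $t=0$: here I would fix the antiderivative by the normalization $F_{\pm\infty}=0$ (legitimate since the tail integral from infinity converges), which yields the first displayed formula, and then extract the small-$h$ behaviour by the scaling $F(\bs{\hat{x}},h)=f(\bs{\hat{x}},0)\big(\log h-\log|\bs{\hat{x}}|-C_m\big)+o(1)$ for a universal constant $C_m$. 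Integrating against $\bs{\hat{x}}\cdot\bs{\hat{\nu}}$ over $\partial\Omega$ separates a $-(\log h)\,\mrm{Res}(f)$ term from the finite remainder; when $\mrm{Res}(f)=0$ the logarithm disappears and the remainder is precisely $\int_{\partial\Omega}f(\bs{\hat{x}},0)\log|\bs{\hat{x}}|\,\bs{\hat{x}}\cdot\bs{\hat{\nu}}\,ds$, the claimed Cauchy principal value, while for $\mrm{Res}(f)\neq0$ the surviving $\mrm{Res}(f)\log|h|$ produces the advertised infinite part; evenness of $f$ in $h$ gives continuity in both subcases.

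I expect the main obstacle to be precisely this $\ell=0$ analysis: justifying the $F_{\pm\infty}=0$ normalization against the divergence of $B_0$, controlling the interchange of the $h\to0$ limit with the boundary integral, and cleanly isolating the finite (principal-value) part from the $\log|h|$ singularity. The parity bookkeeping of the second paragraph is the other point that must be handled carefully—in particular the reliance on $m$ being odd, which is what makes the blanket assertion $\mrm{Res}(F_{\pm\infty})=0$ for even $\ell$ hold rather than merely holding generically.
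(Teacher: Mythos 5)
Your proposal cannot be checked against the paper's argument line by line, because the paper offers no argument: its proof of \cref{thm:continuation} consists entirely of the citation to \cite[Thm.~4]{rosen1995} and \cite[sect.~3.3]{rosen1993}. What you have written is a self-contained reconstruction built on the master identity $I(h)=\mrm{Res}(F_{\pm\infty})-\int_{\partial\Omega}F(\bs{\hat{x}},h)\,\bs{\hat{x}}\cdot\bs{\hat{\nu}}(\bs{\hat{x}})\,ds(\bs{\hat{x}})$ that the paper derives just before the statement, and it is correct in outline: the scaling $u=\vert\bs{\hat{x}}\vert t$ giving $F_{\infty}=B_\ell\,\hat{x}_1^{\ell_1}\hat{x}_2^{\ell_2}\vert\bs{\hat{x}}\vert^{\ell-m}$ (convergent exactly for $\ell\geq1$ since $\ell<m$), the parity relation $F_{-\infty}=(-1)^\ell F_{\infty}$, the reduction of the residues to the angular integral $\int_0^{2\pi}\cos^{\ell_1}\theta\,\sin^{\ell_2}\theta\,d\theta$ via the divergence-free property of $(-2)$-homogeneous fields, the limit $F(\bs{\hat{x}},h)\to0$ for $\ell\geq1$ (uniform on $\partial\Omega$ when the singularity lies off the boundary), and the logarithmic extraction in the $\ell=0$ case together deliver every clause of the theorem. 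Beyond self-containedness, your route buys two things the bare citation hides. First, the paper's definition $F(\bs{\hat{x}},h)=\int_0^h f(\bs{\hat{x}},u)/u\,du$ is actually divergent when $\ell=0$; your renormalization of the antiderivative by the condition $F_{\pm\infty}=0$ (legitimate because the tail integral from infinity converges) is precisely what makes the first display of the theorem meaningful, and the expansion $F(\bs{\hat{x}},h)=f(\bs{\hat{x}},0)\bigl(\log\vert h\vert-\log\vert\bs{\hat{x}}\vert-C_m\bigr)+o(1)$ then cleanly separates the $\mrm{Res}(f)\log\vert h\vert$ divergence from the principal-value term $\int_{\partial\Omega}f(\bs{\hat{x}},0)\log\vert\bs{\hat{x}}\vert\,\bs{\hat{x}}\cdot\bs{\hat{\nu}}\,ds$. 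Second, you correctly observe that the clause ``$\ell\neq0$ even $\Rightarrow\mrm{Res}(F_{\pm\infty})=0$'' does not follow from $r=-2$ alone but needs $m$ odd: for instance $\ell_1=2$, $\ell_2=0$, $\ell=2$, $m=6$ satisfies $r=-2$ yet gives $F_\infty=\hat{x}_1^2/(4\vert\bs{\hat{x}}\vert^4)$ and $\mrm{Res}(F_\infty)=\pi/4\neq0$; the hypothesis is satisfied by every kernel in the paper ($m=3$ or $m=5$) but is invisible in the statement, and your parity bookkeeping makes it explicit. The only steps needing further care in a full write-up are the ones you flag yourself, namely the uniform-in-$\bs{\hat{x}}$ control of the $h\to0$ limits on $\partial\Omega$.
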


\begin{proof}
See \cite[Thm.~4]{rosen1995} and \cite[sect.~3.3]{rosen1993}.
\end{proof}

We utilize the continuation approach to derive \cref{eq:In2-1D}. The integrand \cref{eq:Tn2} reads
\begin{align}\label{eq:regularization}
f(\bs{\hat{x}}, h) = -\frac{h\varphi_0\vert\bs{\hat{n}}_0\vert}{\left[\nrm^2 + h^2\right]^{\frac{3}{2}}},
\end{align}
after translation by $\bs{\hat{x}}_0$. We apply \cref{thm:continuation} with $\ell_1=\ell_2=0$, $\ell=1$, and $m=3$. We have
\begin{align}
F(\bs{\hat{x}}, h) = -\frac{h\varphi_0\vert\bs{\hat{n}}_0\vert}{\nrm^2\sqrt{\nrm^2 + h^2}}, \quad 
\mrm{Res}(F_\infty) = -\int_{\partial\widehat{T}-\bs{\hat{x}}_0}\frac{\varphi_0\vert\bs{\hat{n}}_0\vert}{\nrm^2}\bs{\hat{x}}\cdot\bs{\hat{\nu}}(\bs{\hat{x}})ds(\bs{\hat{x}}).
\end{align}
Therefore, we arrive at the following formula,
\begin{align}
I_{-2}(h) = \mp\int_{\partial\widehat{T}-\bs{\hat{x}}_0}\frac{\varphi_0\vert\bs{\hat{n}}_0\vert}{\nrm^2}\bs{\hat{x}}\cdot\bs{\hat{\nu}}(\bs{\hat{x}})ds(\bs{\hat{x}})
+ \int_{\partial\widehat{T}-\bs{\hat{x}}_0}\frac{h\varphi_0\vert\bs{\hat{n}}_0\vert}{\nrm^2\sqrt{\nrm^2 + h^2}}\bs{\hat{x}}\cdot\bs{\hat{\nu}}(\bs{\hat{x}})ds(\bs{\hat{x}}).
\end{align}
To obtain \cref{eq:In2-1D}, we write $h=\mrm{sign}(h)\vert h\vert$ and observe that $\bs{\hat{x}}\cdot\bs{\hat{\nu}}(\bs{\hat{x}})$ is constant on each edge $\partial\widehat{T}_j-\bs{\hat{x}}_0$ of the shifted triangle and equals the distance from the origin; see \cref{fig:shifted-tri}. Finally, we emphasize that \cref{thm:continuation} tells us that $I_{-2}(h)$ is discontinuous but remains bounded at $0^{\pm}$; the limiting values are those listed in \cref{tab:solid-angle-planar} (with a minus sign). In practice, we set $I_{-2}(0)=(I_{-2}(0^+)+I_{-2}(0^-))/2=0$ and $I_{-2}(h)=0$ for $\vert h\vert$ below some threshold. 

%%%%%%%%%%%%%%%%%%%%%%%%%%%%%%%%%%%%%%%%%%%%%%%%%%%%%%%%%%%%%%%%%%%%%%%%%%%%%%%%%%%%%%%%%%%%%%%%%%%%%%%%%%%%%%%%%%%%%%%%%%%%%%%%%%%%%%%%%%%%%%%%%%%%%%
\paragraph{Weak singularities} To compute $I_{-1}$, we utilize to continuation approach as in \cite{montanelli2022}. This yields
\begin{align}\label{eq:In1}
I_{-1}(\bs{x}_0) = \, & \sum_{j=1}^3\hat{s}_j\int_{\partial\widehat{T}_j-\bs{\hat{x}}_0}\hspace{-0.5cm}\left[P_2(\bs{\hat{x}})\cdot\varphi_0\bs{\hat{n}}_0 + J_0\bs{\hat{x}}\cdot Q_1(\bs{\hat{x}})\right]T_{-1}^{p}(\bs{\hat{x}},h)ds(\bs{\hat{x}}) \\
&  - h\sum_{j=1}^3\hat{s}_j\int_{\partial\widehat{T}_j-\bs{\hat{x}}_0}\left[\bs{\hat{n}}_0/\vert\bs{\hat{n}}_0\vert\cdot Q_1(\bs{\hat{x}})\right]T_{-1}^{q}(\bs{\hat{x}},h)ds(\bs{\hat{x}}) \nonumber \\
& +\frac{3}{2}h\varphi_0\vert\bs{\hat{n}}_0\vert\sum_{i=1}^3a_i\sum_{j=1}^3\hat{s}_j\int_{\partial\widehat{T}_j-\bs{\hat{x}}_0}\hat{x}_1^{3-i}\hat{x}_2^{i-1}T_{-1}^a(\bs{\hat{x}},h)ds(\bs{\hat{x}}) \nonumber \\
& +\frac{3}{2}h\varphi_0\vert\bs{\hat{n}}_0\vert\sum_{i=1}^4c_i\sum_{j=1}^3\hat{s}_j\int_{\partial\widehat{T}_j-\bs{\hat{x}}_0}\hat{x}_1^{4-i}\hat{x}_2^{i-1}T_{-1}^c(\bs{\hat{x}},h)ds(\bs{\hat{x}}), \nonumber
\end{align}
with functions $T_{-1}^{p}$, $T_{-1}^{q}$, $T_{-1}^{a}$, and $T_{-1}^{c}$ given by
\begin{align}
& T_{-1}^{p}(\bs{\hat{x}},h) = \frac{\nrm^2 + 2h\left(h - \mrm{sign}(h)\sqrt{\nrm^2 + h^2}\right)}{\nrm^4\sqrt{\nrm^2 + h^2}}, \\
& T_{-1}^{q}(\bs{\hat{x}},h) = \frac{\sqrt{\nrm^2 + h^2}\mrm{arcsinh}\left(\frac{\nrm}{\vert h\vert}\right) - \nrm}{\nrm^3\sqrt{\nrm^2 + h^2}}, \nonumber \\
& T_{-1}^a(\bs{\hat{x}},h) = \frac{-2h^3 - 3h\nrm^2 + 2\,\mrm{sign}(h)\left[\nrm^2 + h^2\right]^{\frac{3}{2}}}{3\nrm^4\left[\nrm^2 + h^2\right]^{\frac{3}{2}}}, \nonumber \\
& T_{-1}^c(\bs{\hat{x}},h) = \frac{\frac{-3h^2\vert J_0\bs{\hat{x}}_0\vert - 4\nrm^3}{\left[\nrm^2 + h^2\right]^{\frac{3}{2}}} + 3\mrm{arcsinh}\left(\frac{\vert J_0\bs{\hat{x}}_0\vert}{\vert h\vert}\right)}{3\nrm^5}. \nonumber
\end{align}
For planar triangles, the formula \cref{eq:In1} simplifies to
\begin{align}
I_{-1}(\bs{x}_0) = -h\vert\bs{\hat{n}}\vert\sum_{j=1}^3\hat{s}_j\int_{\partial\widehat{T}_j-\bs{\hat{x}}_0}\left(\varphi_{\hat{x}_1}\delta\hat{x}_1+\varphi_{\hat{x}_2}\delta\hat{x}_2\right)T_{-1}^{q}(\bs{\hat{x}},h)ds(\bs{\hat{x}}) \quad\quad \text{(planar triangles)}.
\end{align}

%%%%%%%%%%%%%%%%%%%%%%%%%%%%%%%%%%%%%%%%%%%%%%%%%%%%%%%%%%%%%%%%%%%%%%%%%%%%%%%%%%%%%%%%%%%%%%%%%%%%%%%%%%%%%%%%%%%%%%%%%%%%%%%%%%%%%%%%%%%%%%%%%%%%%%
\subsection{Computing integrals over two curved triangles}

The simplest example of an integral over two curved triangles is
\begin{align}\label{eq:integral-4D}
I = -\int_{\mathcal{T}}\Omega(\bs{y})dS(\bs{y}) = \int_{\mathcal{T}}\int_{\mathcal{T}}\frac{(\bs{x}-\bs{y})\cdot\bs{n}(\bs{x})}{\vert\bs{x}-\bs{y}\vert^3}dS(\bs{x})dS(\bs{y}),
\end{align}
where $\mathcal{T}$ is parametrized by some function $F$. We map the $\bs{y}$-integral back to $\widehat{T}$,
\begin{align}
I = \int_{\mathcal{T}}\int_{\widehat{T}}\frac{(\bs{x} - F(\bs{\hat{y}}))\cdot\bs{n}(\bs{x})}{\vert\bs{x} - F(\bs{\hat{y}})\vert^3}\vert\bs{\hat{n}}(\bs{\hat{y}})\vert dS(\bs{\hat{y}})dS(\bs{x}), \quad
\bs{\hat{n}}(\bs{\hat{y}}) = F_{\hat{y}_1}(\bs{\hat{y}})\times F_{\hat{y}_2}(\bs{\hat{y}}).
\end{align}
We then compute the $\bs{\hat{y}}$-integral with $N$-point Gauss quadrature on triangles \cite{lether1976},
\begin{align}
I \approx \sum_{n=1}^Nw_n\vert\bs{\hat{n}}(\bs{\hat{y}}_n)\vert I_n, \quad
I_n = \int_{\mathcal{T}}\frac{(\bs{x} - F(\bs{\hat{y}}_n))\cdot\bs{n}(\bs{x})}{\vert\bs{x} - F(\bs{\hat{y}}_n)\vert^3}dS(\bs{x}).
\end{align}
The $I_n$'s are computed with the method described in this paper with $N$ points; the total number of points is therefore $M=N^2$. We adopt the same approach when we integrate over two different curved triangles $\mathcal{T}$ and $\mathcal{T}'$. For planar triangles, exact formulas may be found in \cite{lenoir2012}.

%%%%%%%%%%%%%%%%%%%%%%%%%%%%%%%%%%%%%%%%%%%%%%%%%%%%%%%%%%%%%%%%%%%%%%%%%%%%%%%%%%%%%%%%%%%%%%%%%%%%%%%%%%%%%%%%%%%%%%%%%%%%%%%%%%%%%%%%%%%%%%%%%%%%%%
\subsection{Assembling boundary element matrices}\label{sec:bem}

Discretizing \cref{eq:CBIE} with a boundary element method yields the computation of integrals of the form \cite[Chap.~5]{sauter2011}
\begin{align}
I = \int_{\mathcal{T}}\int_{\mathcal{T}}\frac{\partial G(\bs{x},\bs{y})}{\partial\bs{n}(\bs{y})}\varphi(F^{-1}(\bs{x}))\varphi(F^{-1}(\bs{y}))dS(\bs{x})dS(\bs{y}),
\end{align}
for some curved triangle $\mathcal{T}$ parametrized by a function $F$ of degree $q\geq1$ and for some basis function $\varphi$ of degree $p\geq0$. (For the sake of clarity and simplicity, our focus here is directed towards the strongly singular integrals that appear in \cref{eq:CBIE} and scenarios involving identical triangles and basis functions.) The conormal derivative of $G$ is given by
\begin{align}
\frac{\partial G(\bs{x},\bs{y})}{\partial\bs{n}(\bs{y})} = \frac{1}{4\pi}\left(1 - ik\vert\bs{x}-\bs{y}\vert\right)\frac{e^{ik\vert\bs{x}-\bs{y}\vert}}{\vert\bs{x}-\bs{y}\vert^3}(\bs{x}-\bs{y})\cdot\bs{n}(\bs{y}),
\end{align}
which we write as
\begin{align}
\frac{\partial G(\bs{x},\bs{y})}{\partial\bs{n}(\bs{y})} = \frac{1}{4\pi}\left[\frac{1}{\vert\bs{x}-\bs{y}\vert^3} + \frac{k^2}{2}\frac{1}{\vert\bs{x}-\bs{y}\vert} + S(\vert\bs{x}-\bs{y}\vert)\right](\bs{x}-\bs{y})\cdot\bs{n}(\bs{y}),
\end{align}
with a function $S(r)$ given by
\begin{align}
S(r) = \frac{-1 - k^2r^2/2 + e^{ikr}(1 - ikr)}{r^3}.
\end{align}
We first map the $\bs{x}$-integral back to $\widehat{T}$ and then discretize it with $N$-point Gauss quadrature,
\begin{align}
I \approx \sum_{n=1}^Nw_n\varphi(\bs{\hat{x}}_n)\vert\bs{\hat{n}}(\bs{\hat{x}}_n)\vert I_n, 
\end{align}
where the $I_n$'s are given by, for $\bs{y}_n=F(\bs{\hat{x}}_n)$,
\begin{align}
I_n = \frac{1}{4\pi}\int_{\mathcal{T}}\left[\frac{1}{\vert\bs{y}_n - \bs{y}\vert^3} + \frac{k^2}{2}\frac{1}{\vert\bs{y}_n - \bs{y}\vert} + S(\vert\bs{y}_n - \bs{y}\vert)\right](\bs{y}_n - \bs{y})\cdot\bs{n}(\bs{y})\varphi(F^{-1}(\bs{y}))dS(\bs{y}).
\end{align}
The first term is integrated with the method described in this paper with $N$ quadrature points---the convergence rate is $\OO(N^{-1})$, as we will see in \cref{sec:numerics}. The second and third terms are integrated with $N$-point Gauss quadrature on triangles---the convergence rate is $\OO(N^{-1})$ for the first one (it is bounded) and $\OO(N^{-1.5})$ for the second one (it has bounded first derivatives).

%%%%%%%%%%%%%%%%%%%%%%%%%%%%%%%%%%%%%%%%%%%%%%%%%%%%%%%%%%%%%%%%%%%%%%%%%%%%%%%%%%%%%%%%%%%%%%%%%%%%%%%%%%%%%%%%%%%%%%%%%%%%%%%%%%%%%%%%%%%%%%%%%%%%%%
\section{Numerical examples}\label{sec:numerics} 

We present in this section several numerical experiments in 3D to demonstrate the capabilities of our algorithms.

%%%%%%%%%%%%%%%%%%%%%%%%%%%%%%%%%%%%%%%%%%%%%%%%%%%%%%%%%%%%%%%%%%%%%%%%%%%%%%%%%%%%%%%%%%%%%%%%%%%%%%%%%%%%%%%%%%%%%%%%%%%%%%%%%%%%%%%%%%%%%%%%%%%%%%
\subsection{Computation of singular/near-singular integrals}

We start by testing our method for computing 2D and 4D singular/near-singular integrals.

%%%%%%%%%%%%%%%%%%%%%%%%%%%%%%%%%%%%%%%%%%%%%%%%%%%%%%%%%%%%%%%%%%%%%%%%%%%%%%%%%%%%%%%%%%%%%%%%%%%%%%%%%%%%%%%%%%%%%%%%%%%%%%%%%%%%%%%%%%%%%%%%%%%%%%
\paragraph{Singluar/near-singular integrals over a triangle} Consider the quadratic triangle $\mathcal{T}$ given by
\begin{align}\label{eq:quad-tri}
& \bs{a}_1 = (0,0,0), \quad\quad \bs{a}_4 = (1/2,0,0), \\
& \bs{a}_2 = (1,0,0), \quad\quad \bs{a}_5 = (a,b,c), \nonumber \\
& \bs{a}_3 = (0,1,0), \quad\quad \bs{a}_6 = (0,1/2,0), \nonumber
\end{align}
for some scalars $a$, $b$, and $c$; see \cref{fig:exp-triangle} (left). The mapping $F$ and its Jacobian matrix $J$ read
\begin{align}\label{eq:quad-tri2}
F(\bs{\hat{x}}) = \begin{pmatrix}
\hat{x}_1 + 2(2a-1)\hat{x}_1\hat{x}_2 \eqvsp
\hat{x}_2 + 2(2b-1)\hat{x}_1\hat{x}_2 \eqvsp
4c\hat{x}_1\hat{x}_2
\end{pmatrix}, \quad
J(\bs{\hat{x}}) = \begin{pmatrix}
1 + 2(2a-1)\hat{x}_2 & 2(2a-1)\hat{x}_1 \eqvsp
2(2b-1)\hat{x}_2 & 1 + 2(2b-1)\hat{x}_1 \eqvsp
4c\hat{x}_2 & 4c\hat{x}_1
\end{pmatrix}.
\end{align}
We take $a=0.6$, $b=0.7$, and $c=0.5$, and compute the following integral for different values of $\bs{x}_0$,
\begin{align}\label{eq:exp-double-layer}
I(\bs{x}_0) = \int_\mathcal{T}\frac{(\bs{x} - \bs{x}_0)\cdot\bs{n}(\bs{x})}{\vert\bs{x}-\bs{x}_0\vert^3}dS(\bs{x}).
\end{align}
We report the results for $\bs{x}_0=F(0.2,0.4)$ and $\bs{x}_0=F(0.2,0.4)-10^{-4}\bs{z}$ with $\bs{z}=(0,0,1)$ in \cref{fig:exp-double-layer}. The ``exact'' values are computed in Mathematica to $12$-digit accuracy, while the numerical values are computed with $N=n^2$ points for the 2D integrals and $n$ points in 1D; we take $2\leq n\leq200$. (The errors come from the 2D integrals---the 1D errors are much smaller.) In the singular case, the integrand is only weakly singular. Therefore, regularizing with $T_{-2}$ does not accelerate convergence, while $T_{-1}$ regularizaiton yields linear convergence, which is consistent with our previous work \cite{montanelli2022}. In the nearly singular case, the kernel is ``numerically'' strongly singular, and hence $T_{-2}$ regulariztion is needed to get $\OO(N^{-0.5})$ convergence. Again, regularizing with $T_{-1}$ leads to linear convergence.

\begin{figure}
\def\scl{0.2}
\centering
\includegraphics[scale=\scl]{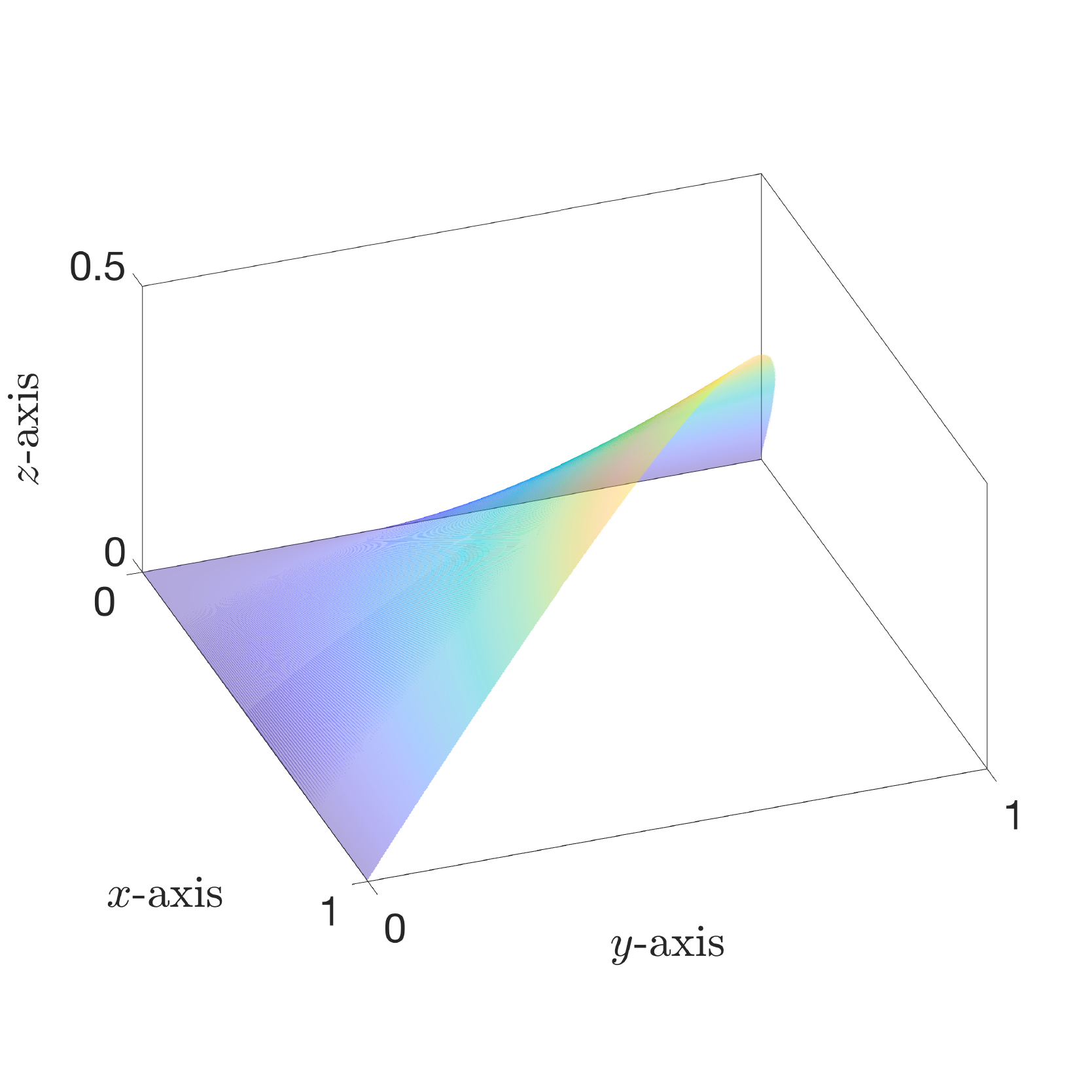}
\includegraphics[scale=\scl]{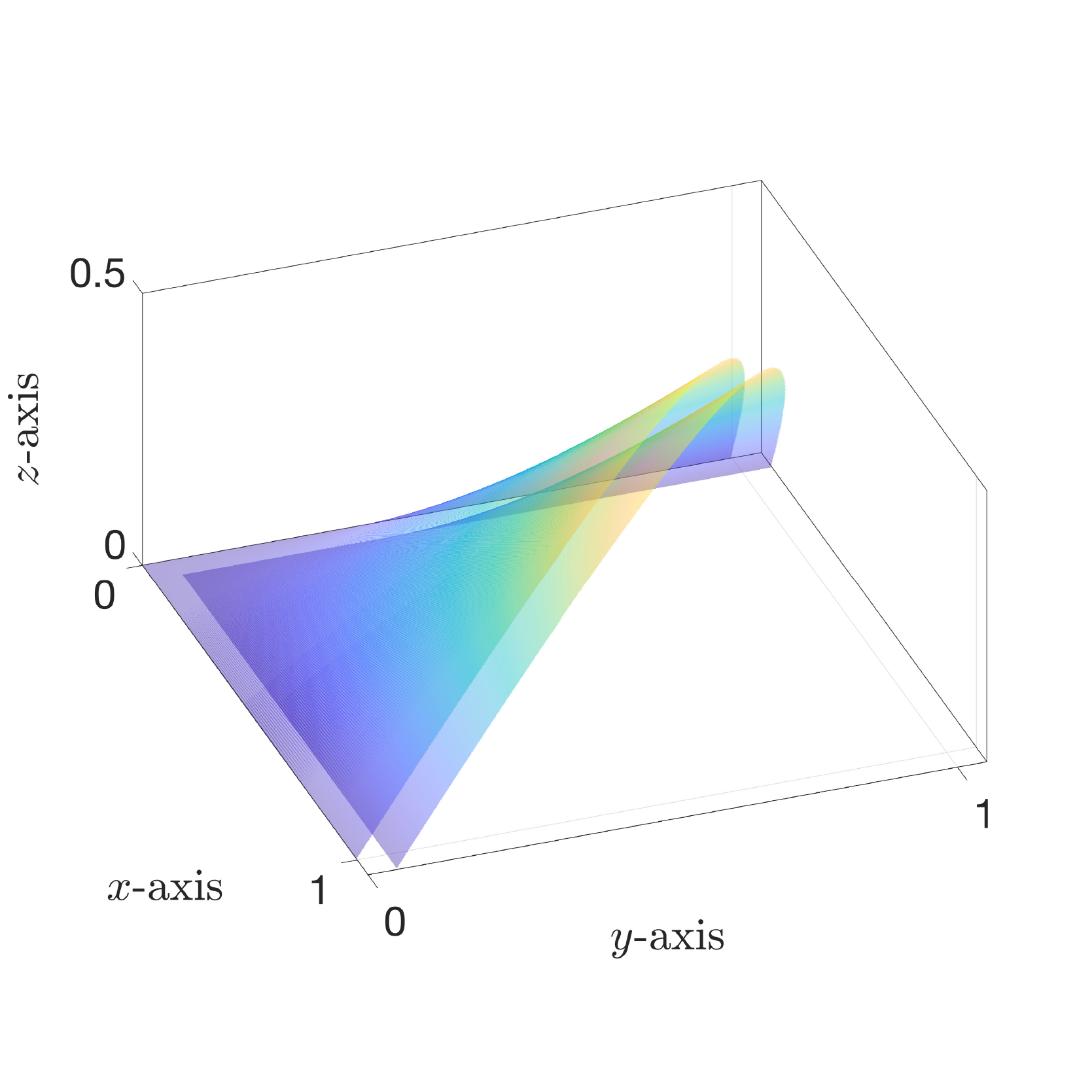}
\caption{\textit{We demonstrate our algorithms on the quadratic triangle $\mathcal{T}$ displayed on the left, which is defined by \cref{eq:quad-tri}--\cref{eq:quad-tri2} with parameters $a=0.6$, $b=0.7$, and $c=0.5$. For 4D integrals, we integrate over the two quadratic triangles $\mathcal{T}$ and $\mathcal{T}'=\mathcal{T}+5\times10^{-2}(1,1,0)$ shown on the right.}}
\label{fig:exp-triangle}
\end{figure}

\begin{figure}
\def\scl{0.425}
\centering
\includegraphics[scale=\scl]{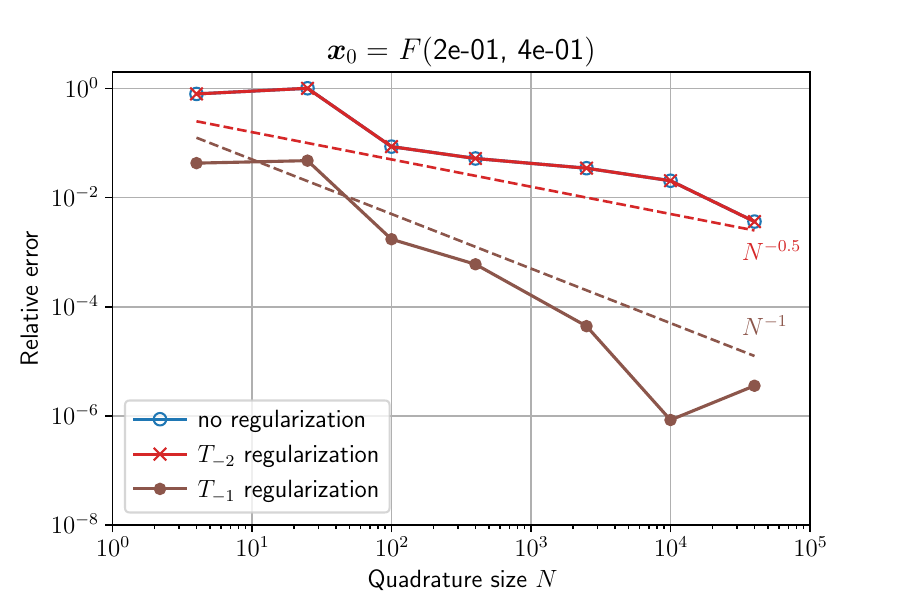}
\includegraphics[scale=\scl]{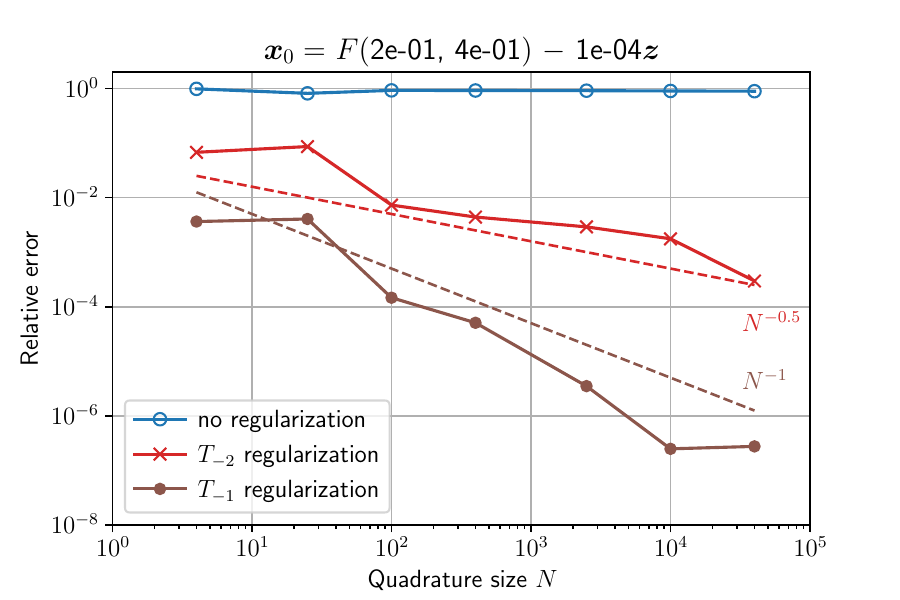}
\caption{\textit{When the singularity is exactly on the triangle, cancellation with the normal leads to a weakly singular kernel. Regularization is not needed to obtain convergence at a rate $\OO(N^{-0.5})$, while $T_{-1}$ regularization accelerates the convergence to linear (left). The nearly singular case is harder---the integrand is numerically strongly singular and regularization is needed to converge (right).}}
\label{fig:exp-double-layer}
\end{figure}

%%%%%%%%%%%%%%%%%%%%%%%%%%%%%%%%%%%%%%%%%%%%%%%%%%%%%%%%%%%%%%%%%%%%%%%%%%%%%%%%%%%%%%%%%%%%%%%%%%%%%%%%%%%%%%%%%%%%%%%%%%%%%%%%%%%%%%%%%%%%%%%%%%%%%%
\paragraph{Singluar/near-singular integrals over two triangles} We first compute 4D singular integrals of the form of \cref{eq:integral-4D} by integrating twice over the triangle $\mathcal{T}$ defined by \cref{eq:quad-tri}--\cref{eq:quad-tri2}. We combine our method for the $\bs{x}$-integral with Gauss quadrature for the $\bs{y}$-integral, and report the results in \cref{fig:exp-double-layer-4D} (left). The ``exact'' value is computed with the method of Sauter and Schwab to 8-digit accuracy \cite[Chap.~5]{sauter2011}. We observe convergence at a rate $\OO(M^{-0.5})$ for $T_{-1}$ regularization, where $M$ is the total number of points in 4D. This is consistent with our previous work \cite{montanelli2022}. To understand this convergence rate, we note that quadrature for the $\bs{x}$-integral converges at a rate $\OO(N^{-1})$ using $T_{-1}$ regularization, while quadrature for the $\bs{y}$-integral converges at a rate at least $\OO(N^{-1})$ since the solid angle $\Omega(\bs{y})$ is as smooth as the regularized $\bs{x}$-integrand (they are both of bounded variation), as illustrated in \cref{fig:solid-angle} (left)---the 4D convergence rate is therefore $\OO(M^{-0.5})$.

We consider now the case where we integrate over $\bs{x}\in\mathcal{T}$, where $\mathcal{T}$ is given by \cref{eq:quad-tri}--\cref{eq:quad-tri2}, and $\bs{y}\in\mathcal{T}'=\mathcal{T}+5\times10^{-2}(1,1,0)$; see \cref{fig:exp-triangle} (right). We report the results in \cref{fig:exp-double-layer-4D} (right). The ``exact'' value is computed in Mathematica to $6$-digit accuracy. For this example, the method of Sauter and Schwab fails to provide accurate results as it simply utilizes 4D Gauss quadrature, which corresponds to the case with no regularization. (Their method is tailored to the cases where the triangles are identical, or share an edge or a vertex.) Our method converges slightly faster than in the singular case. We emphasize that the solid angle $\Omega(\bs{y})=\Omega_{-2}(\bs{y})+\Omega_{-1}(\bs{y})$ may have sharp variations since $\Omega_{-2}(\bs{y})$ quickly varies between values around $2\pi$ and $0$ when $\bs{y}\in\mathcal{T}'$ lies below the interior or the exterior of $\mathcal{T}$, as seen in \cref{fig:solid-angle} (right)---however, it is still at least as smooth as the regularized $\bs{x}$-integrand, so it does not affect the 4D convergence rate.

\begin{figure}
\def\scl{0.425}
\centering
\includegraphics[scale=\scl]{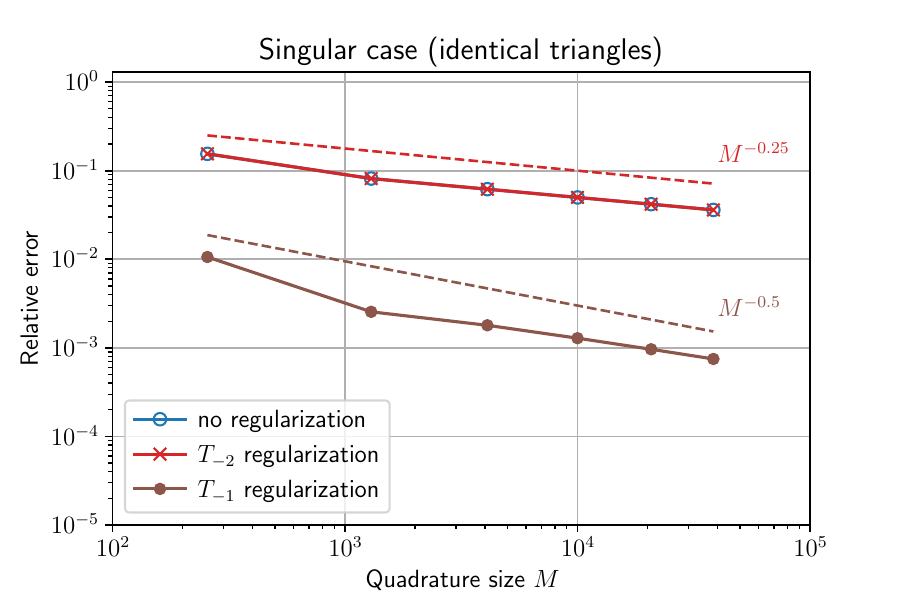}
\includegraphics[scale=\scl]{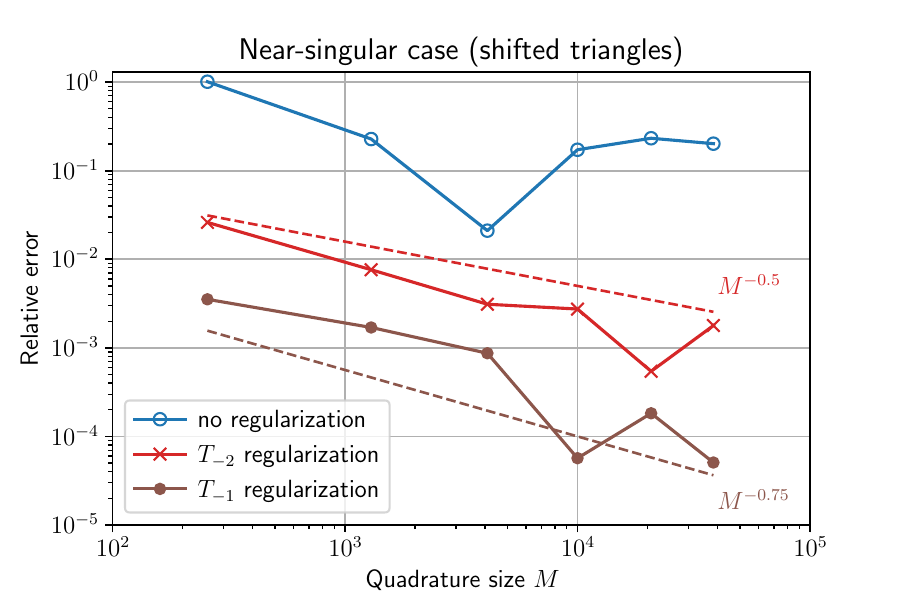}
\caption{\textit{Combining the method described in this paper with Gauss quadrature on triangles yields a method for computing 4D singular integrals of the form of \cref{eq:integral-4D}. The method converges at a speed $\OO(M^{-0.5})$ with the total 4D number of points $M=N^2$ when using $T_{-1}$-regularization.}}
\label{fig:exp-double-layer-4D}
\end{figure}

\begin{figure}
\def\scl{0.2}
\centering
\includegraphics[scale=\scl]{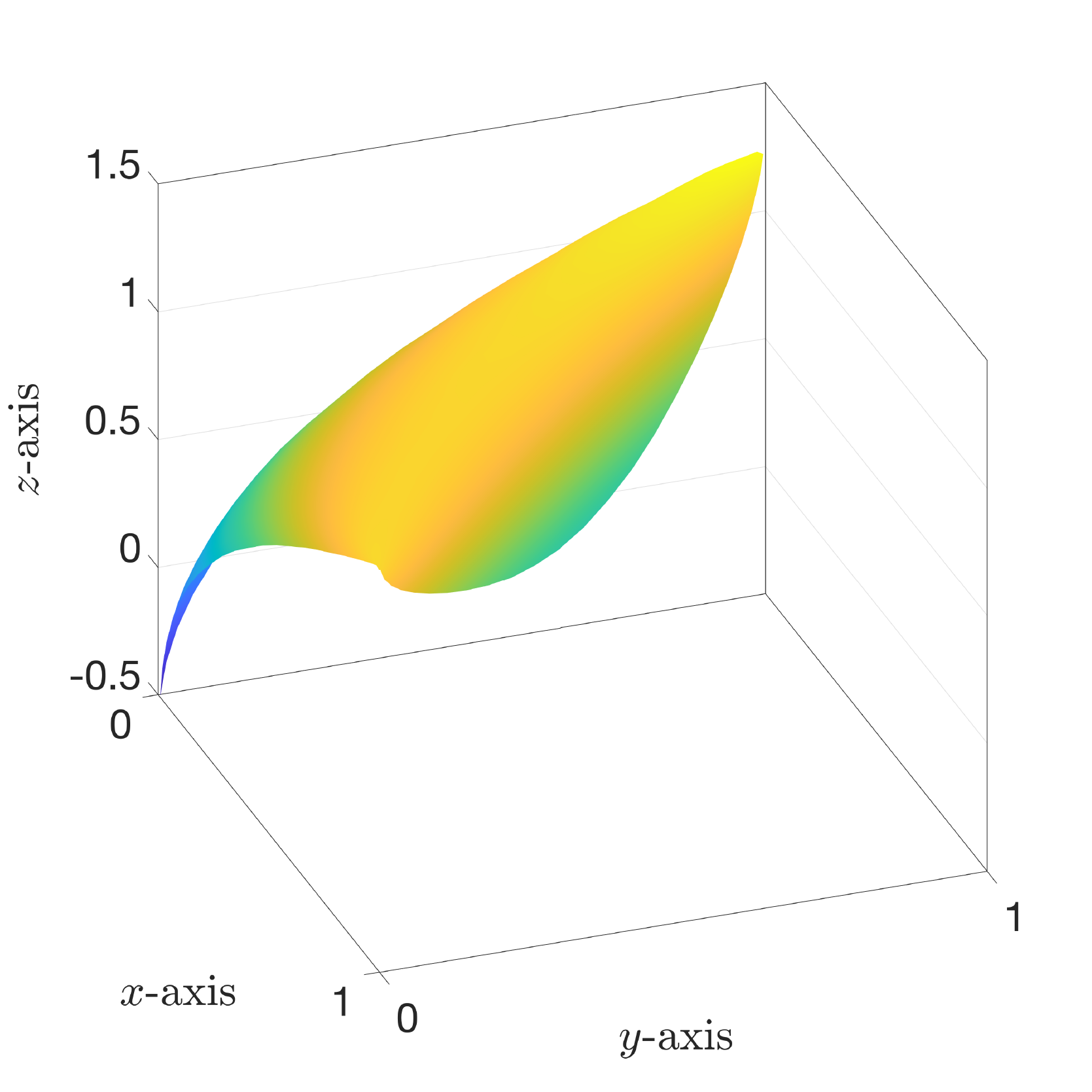}
\hspace{.2cm}
\includegraphics[scale=\scl]{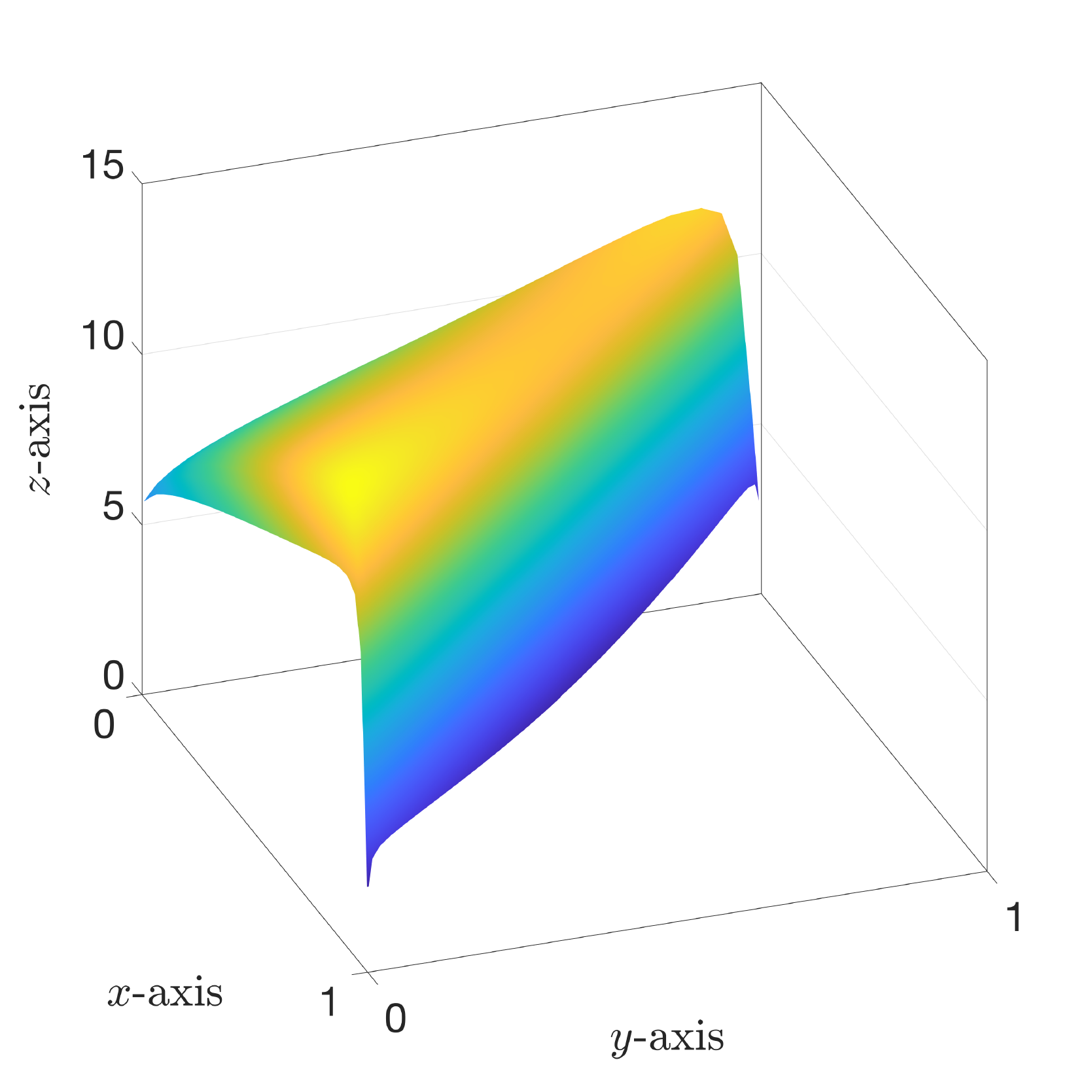}
\caption{\textit{Solid angle function $\Omega(\bs{y})$ for identical (left) and shifted triangles (right). Despite the sharp variations near the edges, the solid angle function $\Omega(\bs{y})$ is of bounded variation.}}
\label{fig:solid-angle}
\end{figure}

%%%%%%%%%%%%%%%%%%%%%%%%%%%%%%%%%%%%%%%%%%%%%%%%%%%%%%%%%%%%%%%%%%%%%%%%%%%%%%%%%%%%%%%%%%%%%%%%%%%%%%%%%%%%%%%%%%%%%%%%%%%%%%%%%%%%%%%%%%%%%%%%%%%%%%
\paragraph{Singluar/near-singular integrals over two triangles (spherical mesh)} We consider a mesh of 84 quadratic triangles, which we generated with Gmsh \cite{geuzaine2009}. (This corresponds to the coarser mesh we will use in \cref{sec:BEM} for boundary element computations.) We compute 4D singular integrals of the form of \cref{eq:integral-4D} in three different scenarios: identical triangles, triangles sharing an edge, and triangles sharing a vertex. For each scenario, the reference value is computed to $8$-digit accuracy with the method of Sauter and Schwab. We report the results in \cref{fig:exp-double-layer-4D-sphere}.

\begin{figure}
\def\scl{0.175}
\def\sclb{0.425}
\centering
\includegraphics[scale=\scl]{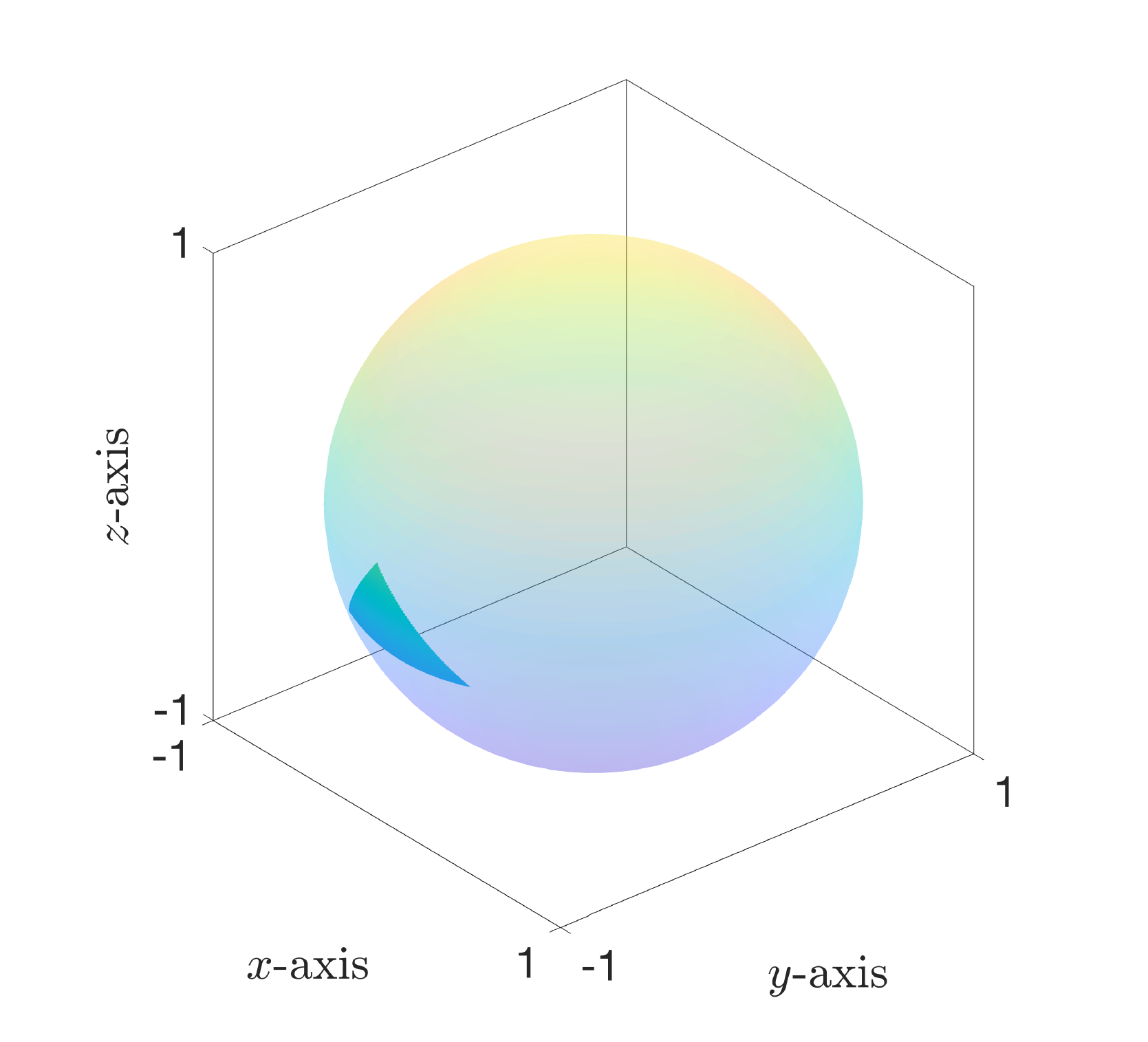}
\includegraphics[scale=\sclb]{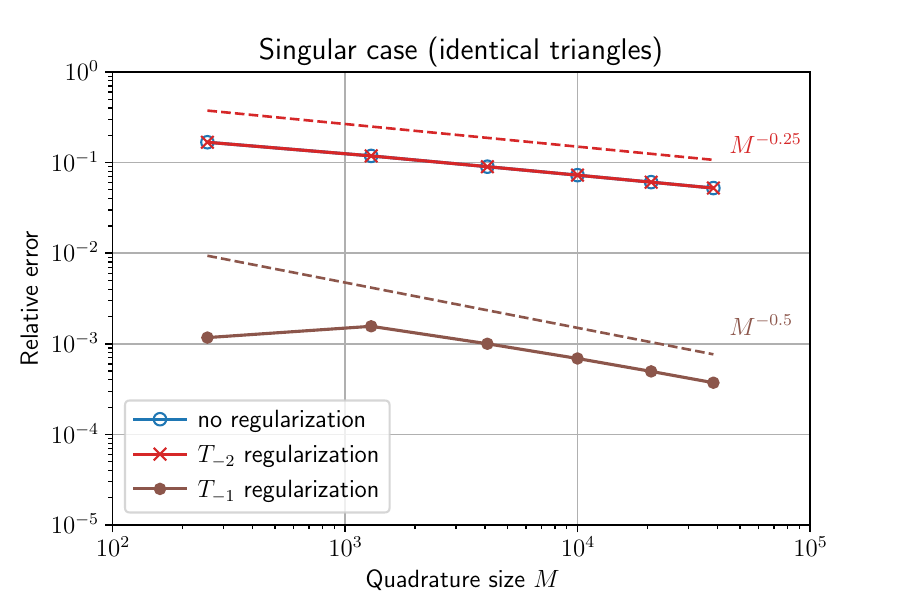}
\includegraphics[scale=\scl]{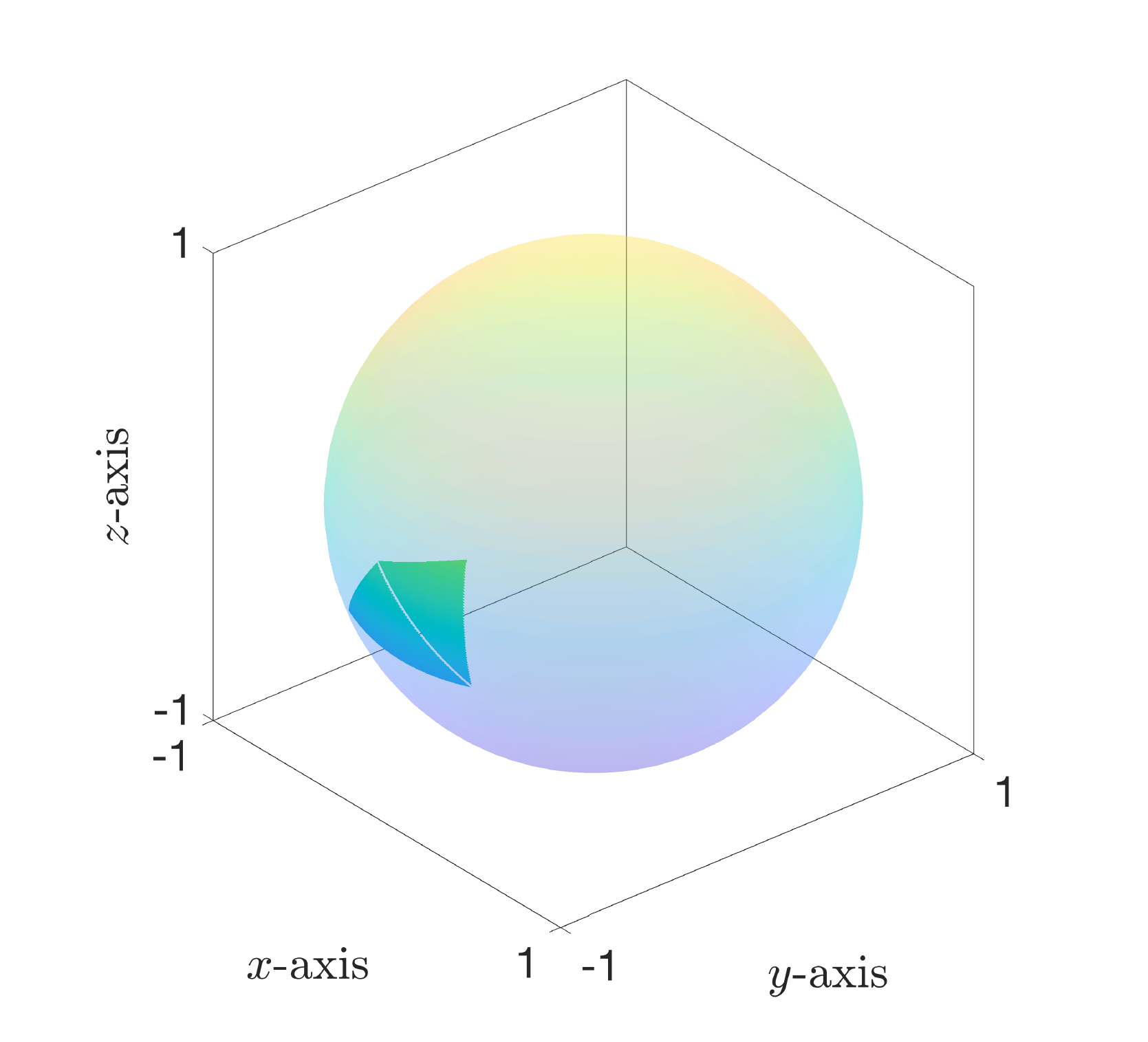}
\includegraphics[scale=\sclb]{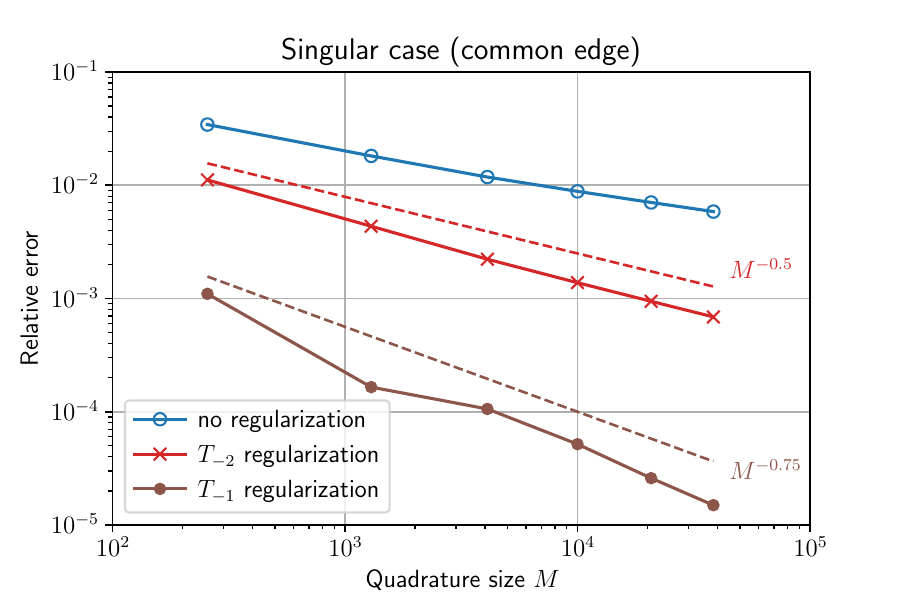}
\includegraphics[scale=\scl]{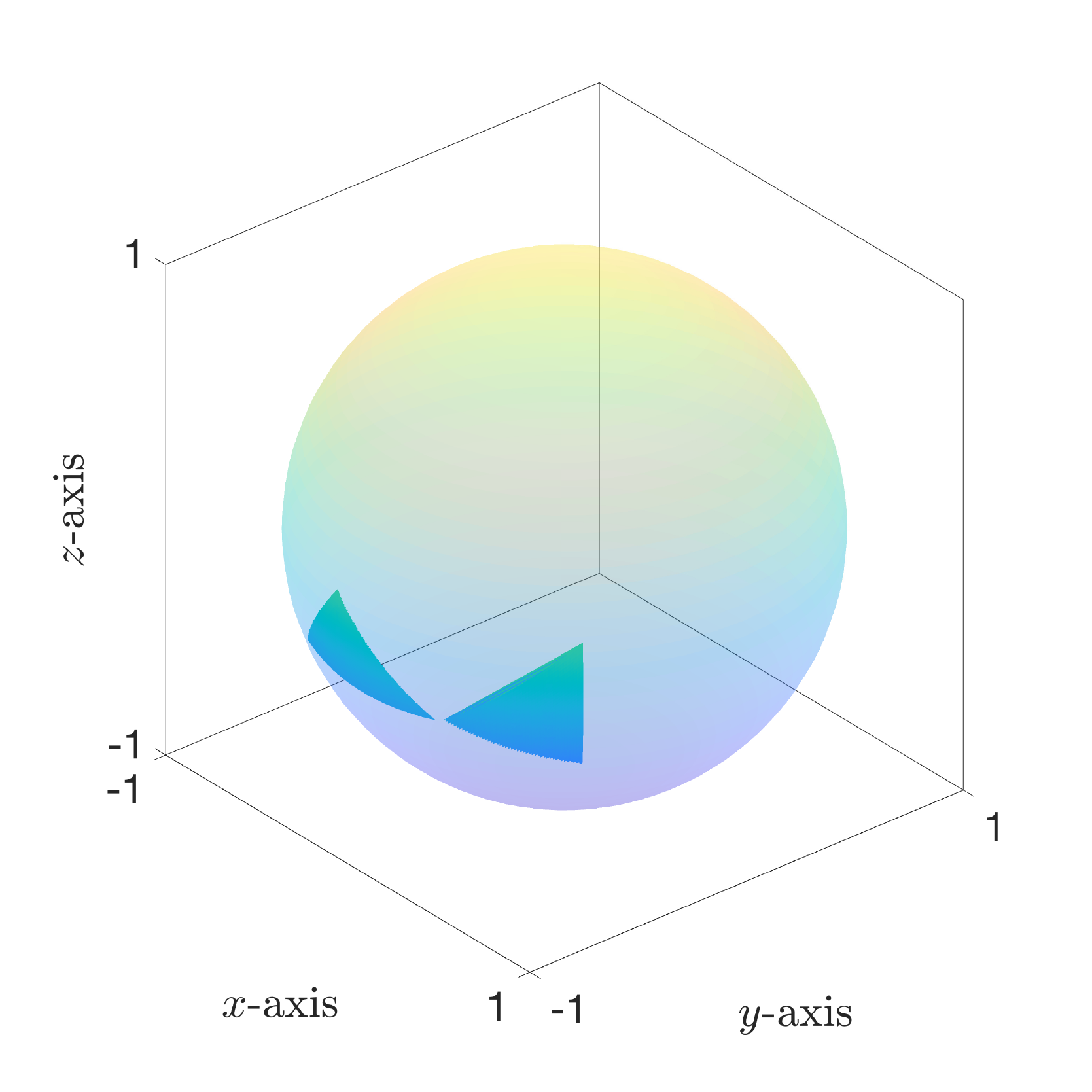}
\includegraphics[scale=\sclb]{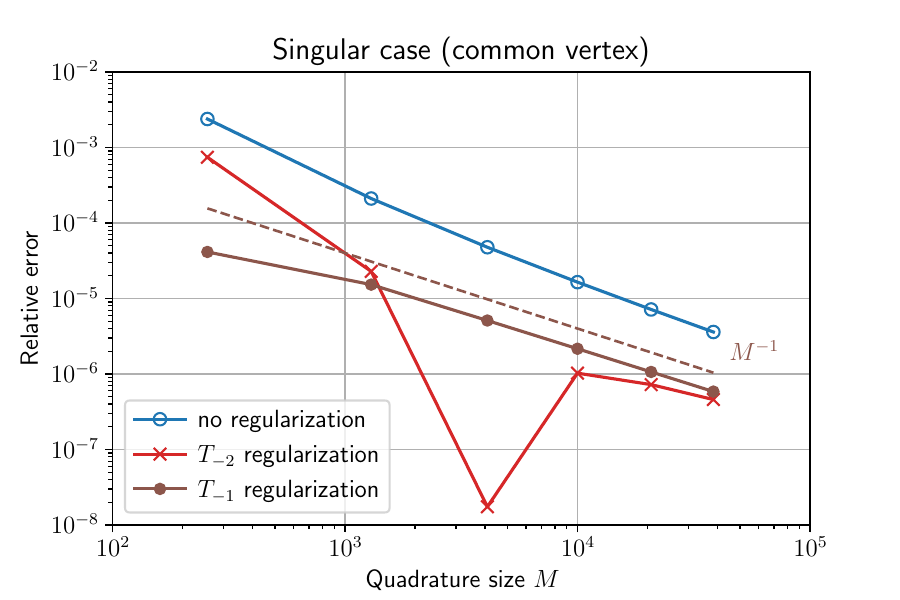}
\caption{\textit{As in the previous experiment, our method converges at a speed $\OO(M^{-0.5})$ with the total 4D number of points $M=N^2$ when using $T_{-1}$-regularization for identical triangles. For triangles sharing an edge or a vertex, the singularity is somewhat weaker and we observe superconvergence.}}
\label{fig:exp-double-layer-4D-sphere}
\end{figure}

%%%%%%%%%%%%%%%%%%%%%%%%%%%%%%%%%%%%%%%%%%%%%%%%%%%%%%%%%%%%%%%%%%%%%%%%%%%%%%%%%%%%%%%%%%%%%%%%%%%%%%%%%%%%%%%%%%%%%%%%%%%%%%%%%%%%%%%%%%%%%%%%%%%%%%
\subsection{Boundary element computations}\label{sec:BEM}

We integrate our method into a boundary element code and solve several scattering problems in 3D to illustrate the advantage of using curved elements.

%%%%%%%%%%%%%%%%%%%%%%%%%%%%%%%%%%%%%%%%%%%%%%%%%%%%%%%%%%%%%%%%%%%%%%%%%%%%%%%%%%%%%%%%%%%%%%%%%%%%%%%%%%%%%%%%%%%%%%%%%%%%%%%%%%%%%%%%%%%%%%%%%%%%%%
\paragraph{Scattering by a sphere} We consider the sound-soft scattering of a plane wave $u^i(r,\theta)=e^{ikr\cos\theta}$ by the unit sphere. We utilize the integral equation \cref{eq:CBIE} with $\eta=k/2$ and discretize it with a boundary element method with quadratic basis functions ($p=2)$ and quadratic triangles ($q=2$), as described in \cref{sec:bem}. We take $k=2\pi$, solve \cref{eq:CBIE} for $\varphi^s$, and evaluate the far-field pattern
\begin{align}\label{eq:far-field}
u_\infty(\bs{\theta}) = \frac{1}{4\pi}\int_{\partial D} e^{-ik\bs{\theta}\cdot\bs{y}}\varphi^s(\bs{y})dS(\bs{y}), \quad \bs{\theta}\in\mathbb{S}^2,
\end{align}
for an increasing number of triangles; see \cref{tab:dofs}. For the sphere, the exact pattern is \cite[eq.~3.32]{colton2013}
\begin{align}\label{eq:far-field-ex}
u_\infty(\theta) = \frac{i}{k}\sum_{n=0}^\infty(2n+1)\frac{j_n(k)}{h_n^{(1)}(k)}P_n(\cos\theta), \quad \theta\in[0,2\pi], 
\end{align}
with Legendre polynomials $P_n$, and spherical Bessel and Hankel functions $j_n$ and $h_n^{(1)}$. We plot the relative error in the far-field pattern in \cref{fig:cv-dofs-sphere} (left). We observe quartic superconvergence as the mesh size $h\to0$; cubic convergence was expected.\footnote{For boundary elements of degree $(p,q)$ with a mesh of size $h$, the error in the numerical far field is bounded by
\begin{align}\label{eq:far-field-error}
\vert u_\infty(\theta) - u_{\infty,h}(\theta)\vert \leq c\left(h^{2(p+1)} + h^{q+1}\right) \quad (\text{elliptic operators of order $0$}).
\end{align}
The first term corresponds to the approximation error ($p$ is the degree of the basis functions), while the second term corresponds to the geometric error ($q$ is the degree of the elements). Results of this form go back to \cite{nedelec1976}; see also~\cite{sauter2011}. For the sphere, $\OO(h^{q+1})$ seems to improve to $\OO(h^{2q})$, which is consistent with the geometric errors observed in \cite{ruiz2021}.} We also display the convergence curve for linear basis functions ($p=1$) and planar triangles ($q=1$).

\begin{table}
\caption{\textit{The triangular meshes of the unit sphere, generated by Gmsh \cite{geuzaine2009}, comprise up to a million triangles. For linear basis functions ($p=1$), the degrees of freedom (DoFs) are the function values at the vertices. For quadratic basis functions ($p=2$), the DoFs also consist in the function values at the center of the edges.}}
\centering
\ra{1.3}
\begin{tabular}{cccc}
\toprule
& & \multicolumn{2}{c}{DoFs} \\
Mesh size & Triangles & $p=1$ & $p=2$ \\
\midrule
$1.09\times10^{+0}$ & 84 & 44 & 170 \\
\num{4.48e-01} & 324 & 164 & 650 \\
\num{2.57e-01} & 1,136 & 570 & 2,274 \\
\num{1.35e-01} & 4,232 & 2,118 & 8,466 \\
\num{7.45e-02} & 16,310 & 8,157 & 32,622 \\
\num{3.43e-02} & 65,394 & 32,699 & -- \\
\num{1.80e-02} & 258,520 & 129,262 & -- \\
\num{8.99e-03} & 1,097,434 & 548,719 & -- \\
\bottomrule
\end{tabular}
\label{tab:dofs}
\end{table}

\begin{figure}
\def\scl{0.425}
\centering
\includegraphics[scale=\scl]{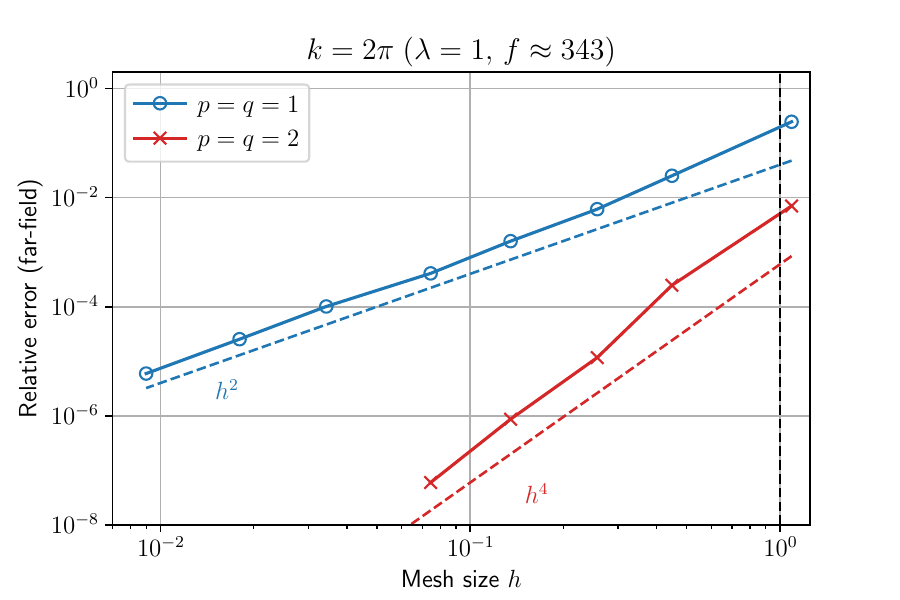}
\includegraphics[scale=\scl]{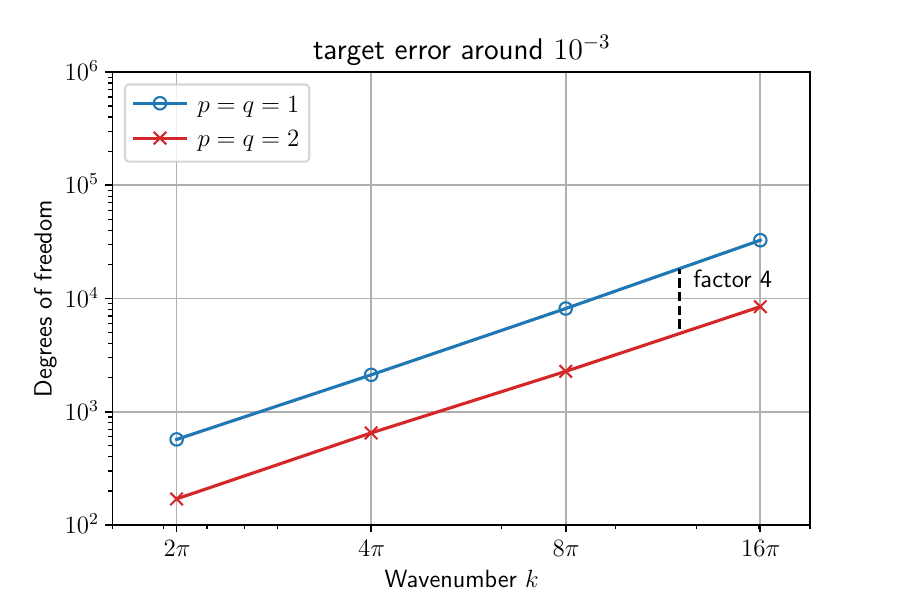}
\caption{\textit{The relative error in the far-field pattern, on the left, decreases like $h^2$ for linear basis functions and planar triangles $(p=q=1)$, while it decreases like $h^4$ for quadratic basis functions and triangles ($p=q=2$). This is a significant speed-up. A consequence of this, on the right, is that to reach a given target error of around $10^{-3}$ for a given wavenumber $k$, a method with quadratic elements needs four times fewer degrees of freedom.}}
\label{fig:cv-dofs-sphere}
\end{figure}

\begin{figure}
\def\scl{0.2}
\centering
\begin{tabular}{cc}
\includegraphics[scale=\scl]{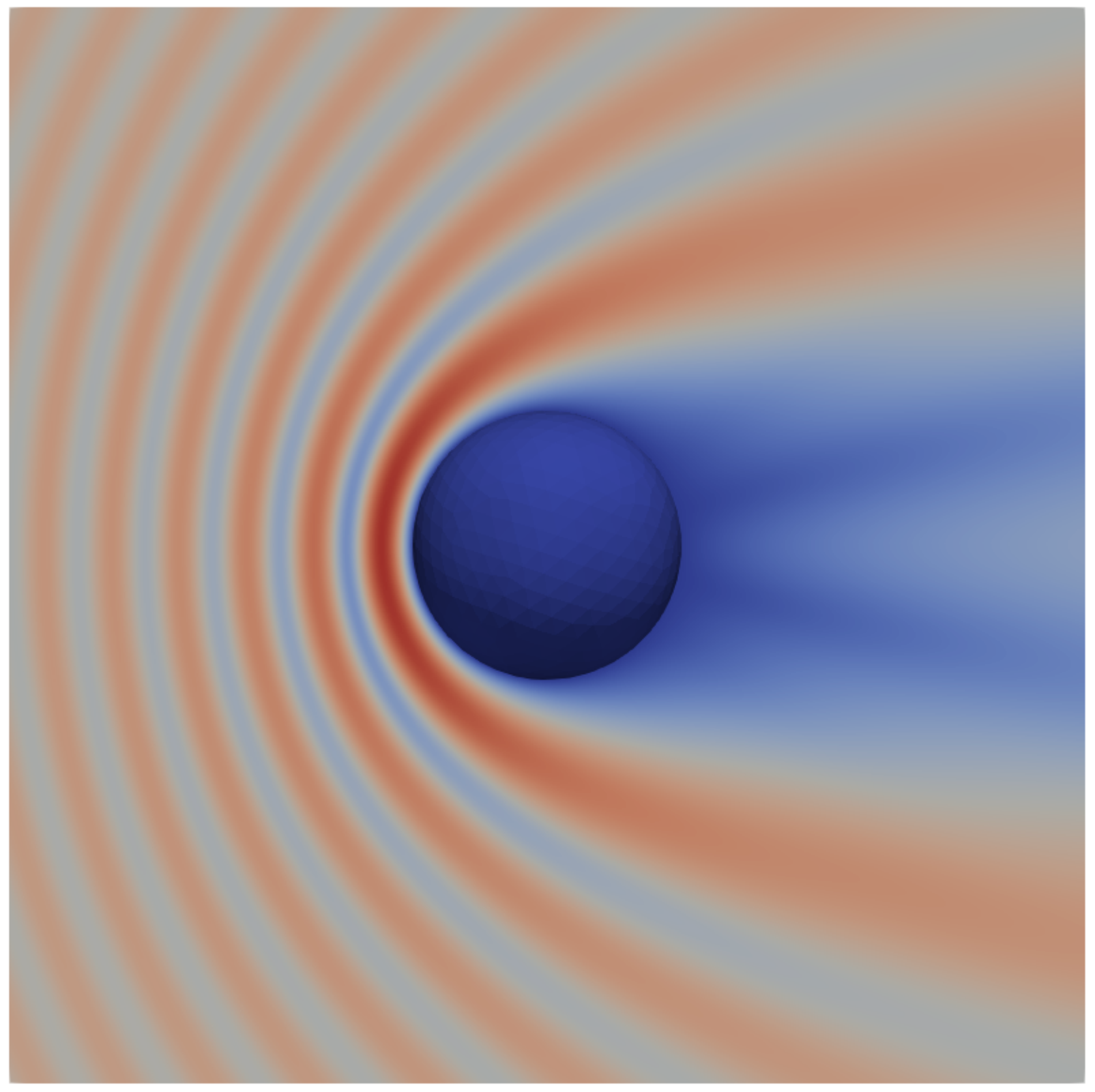} &
\includegraphics[scale=\scl]{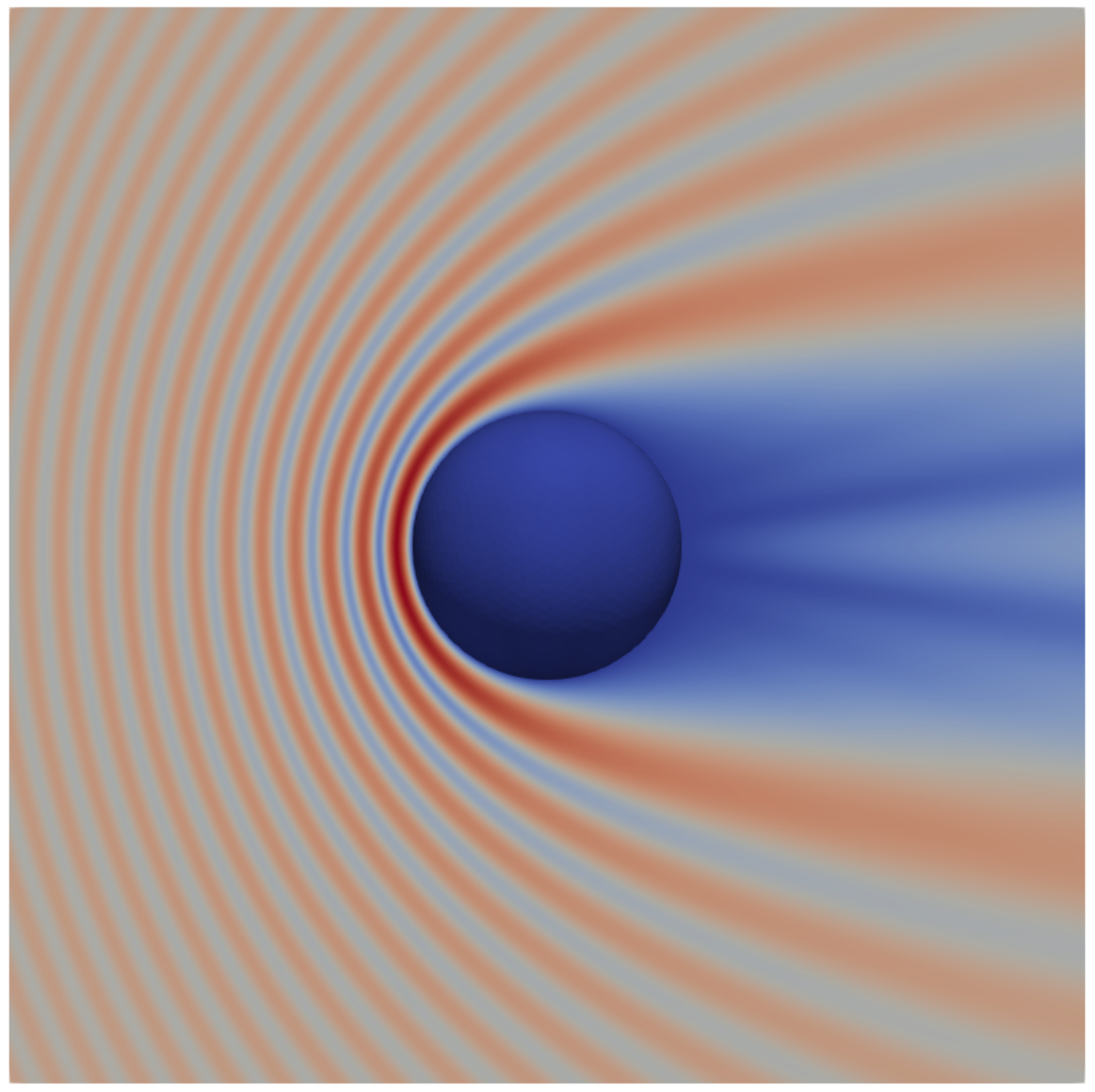} \\
\includegraphics[scale=\scl]{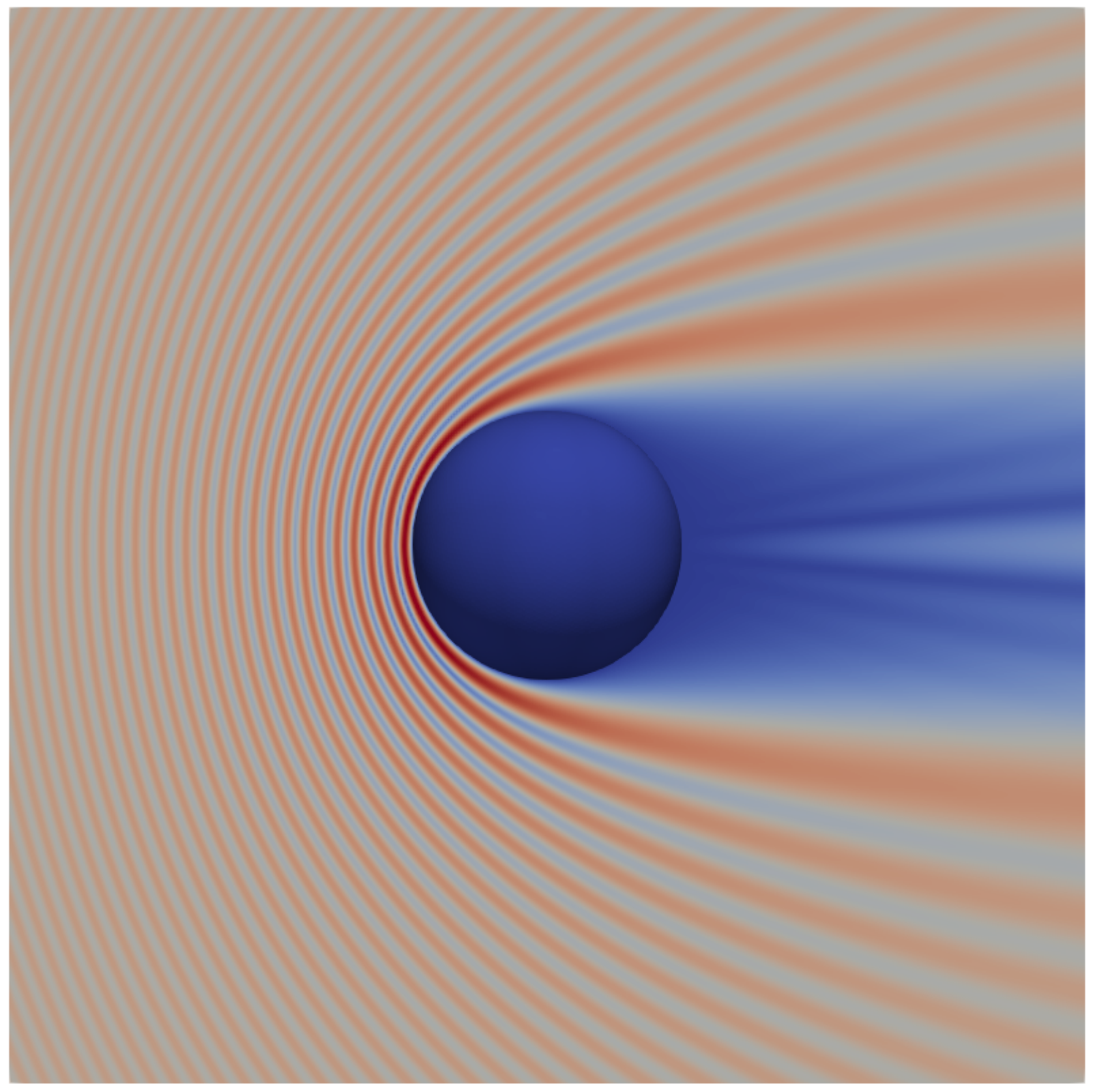} &
\includegraphics[scale=\scl]{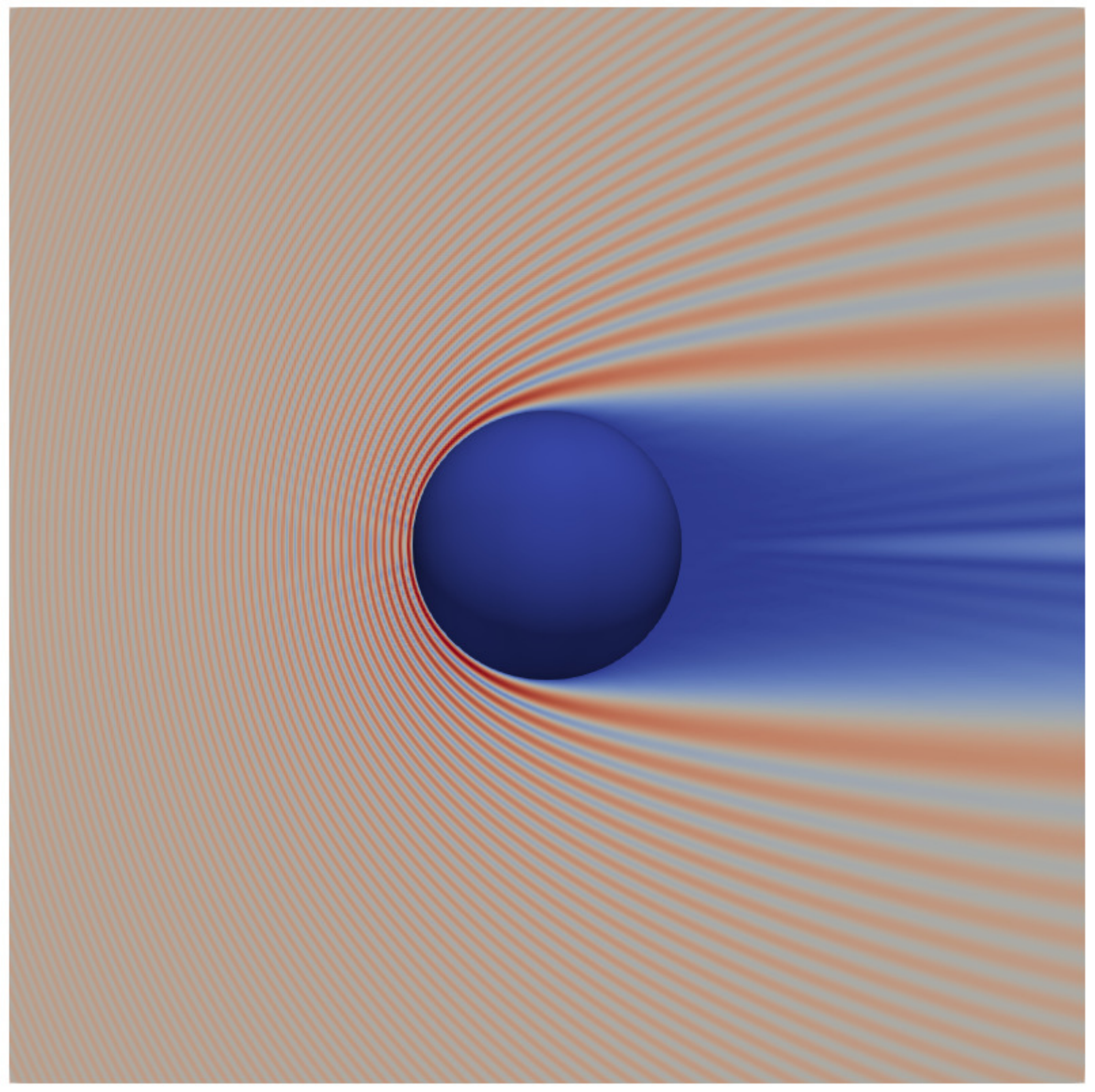}
\end{tabular}
\caption{\textit{The incident plane wave $u^i(r,\theta)=e^{ikr\cos\theta}$ is scattered by the unit sphere. We plot the amplitude of the total field $u^i+u^s$ for $k=2\pi$ (top left), $k=4\pi$ (top right), $k=8\pi$ (bottom left), and $k=16\pi$ (bottom right). The solution is computed to about six digits of accuracy with a boundary element method with quadratic triangles.}}
\label{fig:sol-sphere}
\end{figure}

We now take $k\in\{2\pi,4\pi,8\pi,16\pi\}$, solve \cref{eq:CBIE} for $\varphi^s$, and seek the number of degrees of freedom needed to reach a relative error on the far-field pattern around $10^{-3}$ for each $k$. We report the results in \cref{fig:cv-dofs-sphere} (right). We observe that using quadratic basis functions and triangles reduces the number of degrees of freedom by a factor of about four, while also decreasing computer time by a similar factor at high frequency, as seen in \cref{tab:computer-time}. All solutions are shown in \cref{fig:sol-sphere}.

\begin{table}
\caption{\textit{To solve the combined boundary integral \cref{eq:CBIE}, one has to assemble the boundary element matrices and then solve the resulting linear system---the dominant cost of the inversion is the computation of the hierarchical $LU$ factors, as discussed in the last paragraph of this section. We display below the computer time, in seconds, to obtain the target error of \cref{fig:cv-dofs-sphere} (right). We observe that using quadratic elements ($p=q=2$) decreases the total computer time by a factor up to $4.6$ at high frequency.}}
\centering
\ra{1.3}
\begin{tabular}{ccccc}
\toprule
& \multicolumn{2}{c}{$p=q=1$} & \multicolumn{2}{c}{$p=q=2$} \\
$k$ & Assembling matrices & Computing $LU$ & Assembling matrices & Computing $LU$ \\
\midrule
$2\pi$ & $1.09\mrm{e}$$-$$1$ & $6.26\mrm{e}$$-$$2$ & $7.55\mrm{e}$$+$$0$ & $4.29\mrm{e}$$-$$3$ \\
$4\pi$ & $1.48\mrm{e}$$+$$0$ & $6.54\mrm{e}$$+$$0$ & $7.39\mrm{e}$$+$$0$ & $1.02\mrm{e}$$-$$1$ \\
$8\pi$ & $8.09\mrm{e}$$+$$0$ & $7.30\mrm{e}$$+$$1$ & $2.59\mrm{e}$$+$$1$ & $1.17\mrm{e}$$+$$1$ \\
$16\pi$ & $8.55\mrm{e}$$+$$1$ & $1.96\mrm{e}$$+$$3$ & $1.40\mrm{e}$$+$$2$ & $3.02\mrm{e}$$+$$2$ \\
\bottomrule
\end{tabular}
\label{tab:computer-time}
\end{table}

%%%%%%%%%%%%%%%%%%%%%%%%%%%%%%%%%%%%%%%%%%%%%%%%%%%%%%%%%%%%%%%%%%%%%%%%%%%%%%%%%%%%%%%%%%%%%%%%%%%%%%%%%%%%%%%%%%%%%%%%%%%%%%%%%%%%%%%%%%%%%%%%%%%%%%
\paragraph{Scattering by half-spheres} We now illustrate the robustness of our method by considering the scattering of a plane wave by two half-spheres of radius one centered at $(0,0,\pm\delta)$ for $\delta=0.5$, $\delta=0.2$, $\delta=0.1$, and $\delta=0$; see \cref{fig:sol-half-spheres-1}. Meshes were created using Gmsh, and the accuracy of the solutions was assessed by comparing them to the numerical solutions presented in \cite{montanelli2022}. The solutions are correct to about six digits of accuracy.

\begin{figure}
\def\scl{0.2}
\centering
\begin{tabular}{cc}
\includegraphics[scale=\scl]{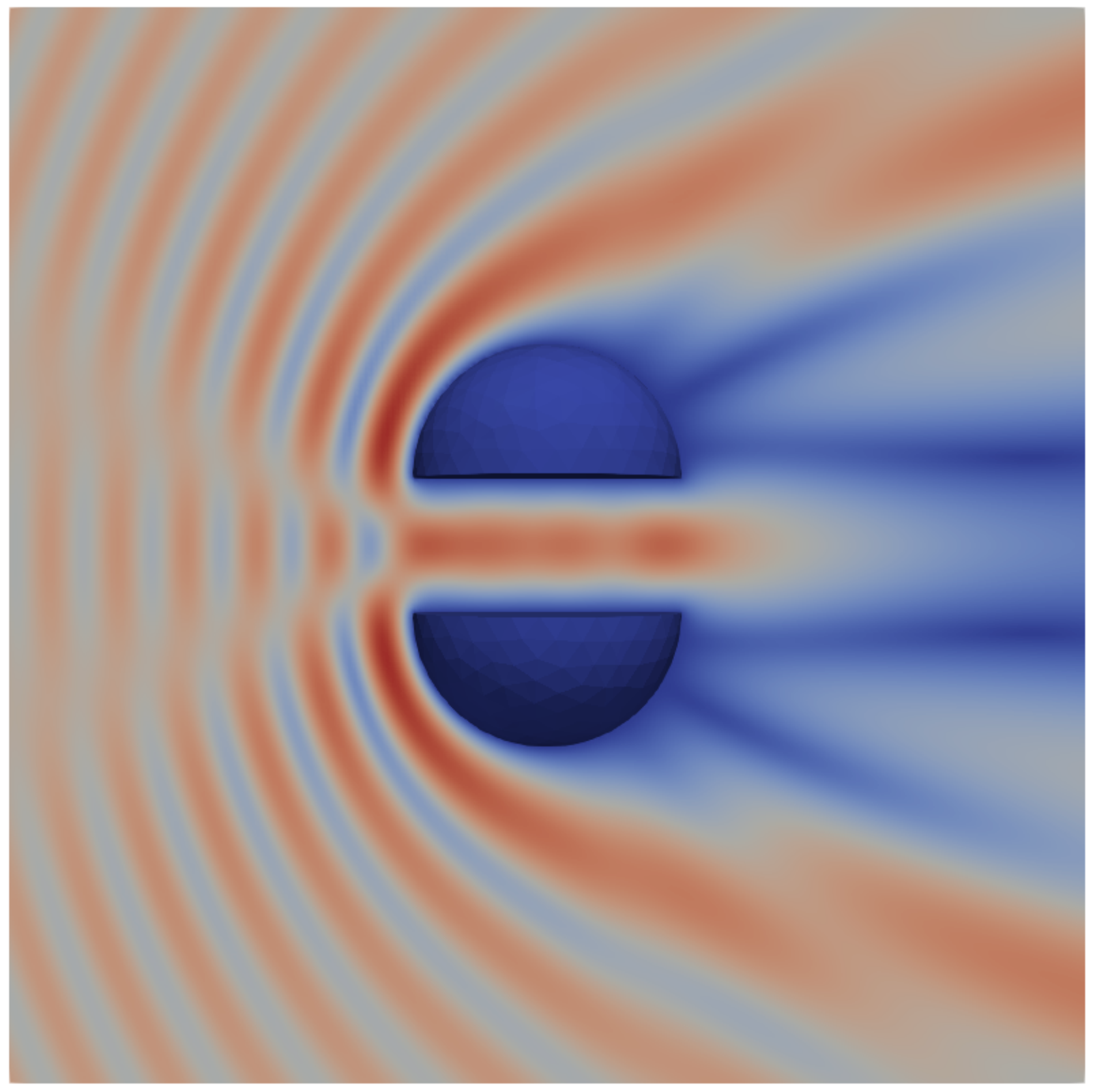} &
\includegraphics[scale=\scl]{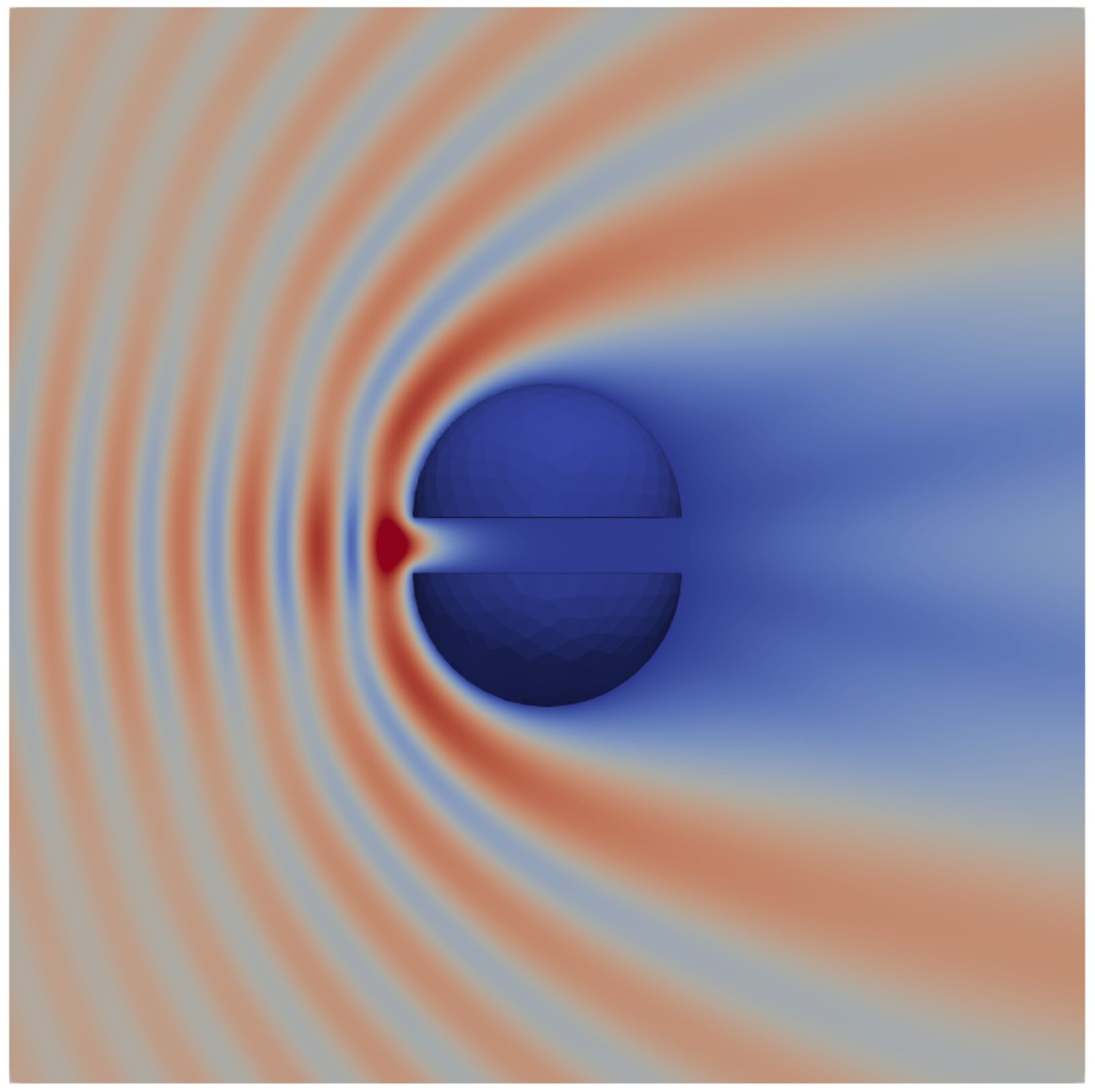} \\
\includegraphics[scale=\scl]{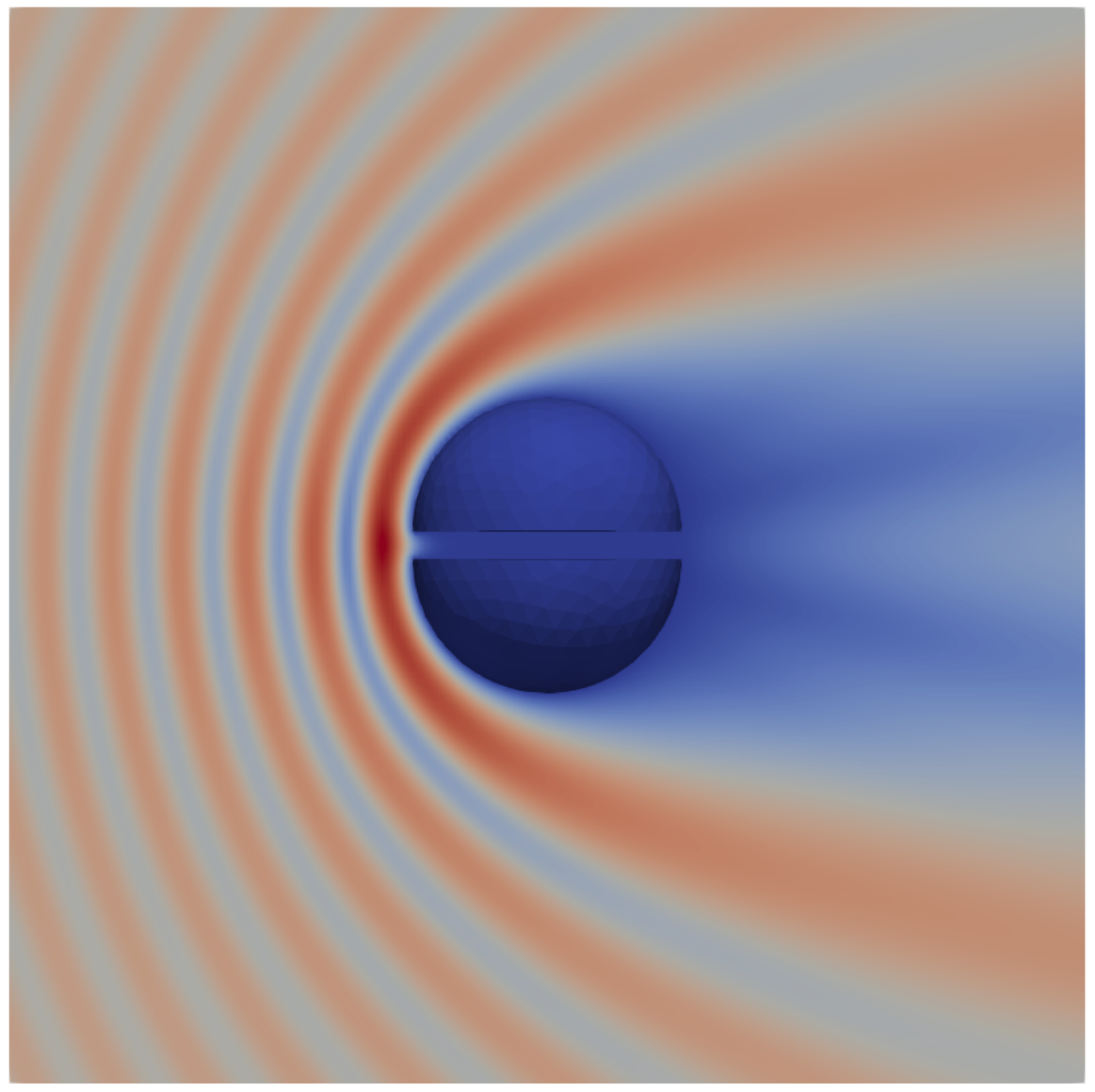} &
\includegraphics[scale=\scl]{sol-2pi.pdf}
\end{tabular}
\caption{\textit{The incident plane wave $u^i(r,\theta)=e^{ikr\cos\theta}$ is scattered by two half-spheres of radius one centered at $(0,0,\pm\delta)$. We plot the amplitude of the total field $u^i+u^s$ for $k=2\pi$ and $\delta=0.5$ (top left), $\delta=0.2$ (top right), $\delta=0.1$ (bottom left), and $\delta=0$ (bottom right). This is a challenging configuration as assembling the boundary element matrices requires to computation of many near-singular integrals.}}
\label{fig:sol-half-spheres-1}
\end{figure}

We conclude this section with a few comments on implementation. We have added our method for computing singular integrals to the C++ \href{http://leprojetcastor.gitlab.labos.polytechnique.fr/castor/}{\texttt{castor}} library of \'{E}cole Polytechnique. The \texttt{castor} library provides tools to create and manipulate matrices \textit{\`a la} MATLAB, and uses an optimized BLAS library for fast linear algebra computations. The finest mesh ($h\approx8.99\times10^{-3}$) yields dense matrices of size $10^6\times10^6$---we employed hierarchical matrices for compression, and, to solve the resulting linear systems, GMRES \cite{saad1986} preconditioned with a hierarchical $LU$ factorization at a lower precision \cite{bebendorf2005}. In this setup, the dominant cost is the computation of the $LU$ factors---GMRES typically converges to a relative residual below $10^{-10}$ in a few iterations. The computations were carried out on an Intel Xeon Gold 6154 processor (3.00 GHz, 36 cores) with 512 GB of RAM.

%%%%%%%%%%%%%%%%%%%%%%%%%%%%%%%%%%%%%%%%%%%%%%%%%%%%%%%%%%%%%%%%%%%%%%%%%%%%%%%%%%%%%%%%%%%%%%%%%%%%%%%%%%%%%%%%%%%%%%%%%%%%%%%%%%%%%%%%%%%%%%%%%%%%%%
\section{Discussion}

We have presented in this paper a novel method for computing strongly singular and near-singular integrals based on singularity subtraction and the continuation approach. This method allows us to solve the Helmholtz exterior Dirichlet problem with a combined boundary integral equation and curved elements. We have demonstrated the accuracy and robustness of our algorithms with several experiments in 3D and have shown numerical evidence that using curved elements is advantageous---it reduces memory usage and computer time by a factor up to four.

We aim to explore various strategies to improve the efficiency of curved boundary element methods. One strategy we are considering is using the continuation approach on curved patches directly. Currently, our method involves projecting the patches onto a reference flat domain, which leads to complex formulas and the computation of 2D integrals. However, if we could use the continuation approach directly, we would only need to deal with smooth 1D integrals on the patches' boundaries, significantly reducing computational time and making curved boundary element methods even more attractive. This could be based on the generalized Stokes theorem for differential forms, similarly to \cite{zhu2022}. Lastly, one of the drawbacks of our method is the lack of rigorous justification of the convergence speeds. To address this, we plan to prove rigorous error bounds, starting with the weakly-singular case. These bounds will be based on convergence results for Gauss-Legendre quadrature rules for functions of limited regularity \cite{xiang2012}.

%%%%%%%%%%%%%%%%%%%%%%%%%%%%%%%%%%%%%%%%%%%%%%%%%%%%%%%%%%%%%%%%%%%%%%%%%%%%%%%%%%%%%%%%%%%%%%%%%%%%%%%%%%%%%%%%%%%%%%%%%%%%%%%%%%%%%%%%%%%%%%%%%%%%%%
\appendix
\section{Elasticity potentials}\label{sec:elasticity}

Elastodynamics requires the computation of singular integrals of the form of
\begin{align}\label{eq:intsing2-elasticity}
\widetilde{I}(\bs{x}_0) = \int_{\mathcal{T}}\frac{(\bs{x}-\bs{x}_0)\cdot\bs{e}}{\vert\bs{x}-\bs{x}_0\vert^3}\varphi(F^{-1}(\bs{x}))dS(\bs{x}),
\end{align}
for some arbitrary unit vector $\bs{e}$; see, e.g., \cite{chaillat2017}. Following steps 1--2 of \cref{sec:algorithms} leads to
\begin{align}
\widetilde{I}(\bs{x}_0) = \int_{\widehat{T}}\frac{(F(\bs{\hat{x}})-F(\bs{\hat{x}}_0)-h\bs{\hat{n}}_0/\vert\bs{\hat{n}}_0\vert)\cdot\vert\bs{\hat{n}}(\bs{\hat{x}})\vert\bs{e}}{\vert F(\bs{\hat{x}})-\bs{x}_0\vert^3}\varphi(\bs{\hat{x}})dS(\bs{\hat{x}}).
\end{align}
We compute the asymptotic term $\widetilde{T}_{-2}$,
\begin{align}\label{eq:Tn2-elasticity}
\widetilde{T}_{-2}(\bs{\hat{x}},h) = \frac{\left[J_0(\bs{\hat{x}}-\bs{\hat{x}}_0) - h\bs{\hat{n}}_0/\vert\bs{\hat{n}}_0\vert\right]\cdot\vert\bs{\hat{n}}_0\vert\bs{e}}{\left[\vert J_0(\bs{\hat{x}} - \bs{\hat{x}}_0)\vert^2 + h^2\right]^{\frac{3}{2}}}\varphi_0.
\end{align}
For the new term in \cref{eq:Tn2-elasticity}, we apply \cref{thm:continuation} with $\ell=0$. This yields
\begin{align}
\widetilde{I}_{-2}(\bs{x}_0) = \, & \varphi_0\sum_{j=1}^3\hat{s}_j\int_{\partial\widehat{T}_j-\bs{\hat{x}}_0}\left(J_0\bs{\hat{x}}\cdot\vert\bs{\hat{n}}_0\vert\bs{e}\right)\widetilde{g}_{-2}(\bs{\hat{x}},h)ds(\bs{\hat{x}})\\
& -\mrm{sign}(h)\varphi_0\left(\bs{\hat{n}}_0\cdot\bs{e}\right)\sum_{j=1}^3\hat{s}_j\int_{\partial\widehat{T}_j-\bs{\hat{x}}_0}\frac{\sqrt{\nrm^2 + h^2} - \vert h\vert}{\nrm^2\sqrt{\nrm^2 + h^2}}ds(\bs{\hat{x}}), \nonumber
\end{align}
where the one-dimensional integrand $\widetilde{g}_{-2}$ is defined by
\begin{align}\label{eq:gn2-elasticity}
\widetilde{g}_{-2}(\bs{\hat{x}},h) = -\frac{1}{\vert J_0\bs{\hat{x}}\vert^2\sqrt{\vert J_0\bs{\hat{x}}\vert^2 + h^2}} + \frac{\mrm{arcsinh}\left(\frac{\vert J_0\bs{\hat{x}}\vert}{\vert h\vert}\right)}{\vert J_0\bs{\hat{x}}\vert^3}.
\end{align}
When $h=0$, the first term in \cref{eq:gn2-elasticity} integrates to $0$ (it is the residue of \cref{thm:continuation}), while the second term may be replaced by $\log\vert\bs{\hat{x}}\vert/\vert J_0\bs{\hat{x}}\vert^3$ (it is the Cauchy principal value).

%%%%%%%%%%%%%%%%%%%%%%%%%%%%%%%%%%%%%%%%%%%%%%%%%%%%%%%%%%%%%%%%%%%%%%%%%%%%%%%%%%%%%%%%%%%%%%%%%%%%%%%%%%%%%%%%%%%%%%%%%%%%%%%%%%%%%%%%%%%%%%%%%%%%%%
\section*{Acknowledgments}

We express our gratitude to Laurent Series for his valuable assistance in performing the computations at the CMAP department of \'{E}cole Polytechnique. We also thank the members of the Inria GAMMA research team, in particular Lucien Rochery and Matthieu Manoury, for providing us several boundary element meshes with quadratic triangles.

%%%%%%%%%%%%%%%%%%%%%%%%%%%%%%%%%%%%%%%%%%%%%%%%%%%%%%%%%%%%%%%%%%%%%%%%%%%%%%%%%%%%%%%%%%%%%%%%%%%%%%%%%%%%%%%%%%%%%%%%%%%%%%%%%%%%%%%%%%%%%%%%%%%%%%
\bibliographystyle{siamplain}
\bibliography{/Users/montanelli/Dropbox/HM/WORK/ACADEMIA/BIBLIOGRAPHY/references.bib}

%%%%%%%%%%%%%%%%%%%%%%%%%%%%%%%%%%%%%%%%%%%%%%%%%%%%%%%%%%%%%%%%%%%%%%%%%%%%%%%%%%%%%%%%%%%%%%%%%%%%%%%%%%%%%%%%%%%%%%%%%%%%%%%%%%%%%%%%%%%%%%%%%%%%%%
\end{document}